\documentclass[a4paper, 11pt, reqno]{amsart}
\parindent=15pt
\parskip=3pt
\setlength{\textwidth}{6.2in}
\setlength{\oddsidemargin}{-5pt}
\setlength{\evensidemargin}{-5pt}
\setlength{\textheight}{10in}
\setlength{\topmargin}{-19pt}

\usepackage{amsmath,amsfonts,amssymb,amsthm,enumerate}
\usepackage{graphicx}
\usepackage{xy} \xyoption{all}

\usepackage{tikz,pgf,amscd,enumerate,multicol}
\usetikzlibrary{automata,arrows.meta,positioning}

\usepackage{color}

\usepackage[pagebackref,colorlinks]{hyperref}

\newtheorem{thm}{Theorem}[section]
\newtheorem{cor}[thm]{Corollary}
\newtheorem{prop}[thm]{Proposition}
\newtheorem{lem}[thm]{Lemma}

\newtheorem*{ques}{Question 4.7}

\theoremstyle{definition}
\newtheorem{defn}[thm]{Definition}

\newtheorem{rem}[thm]{Remark}
\newtheorem{quess}[thm]{Question}

\let\phi\varphi

\def\mG{\mathcal{G}}
\def\M{\mathbb{M}}

\newcommand{\K}{\mathsf{k}}

\newcommand{\gr}{\operatorname{gr}}

\pagestyle{plain}


\begin{document}
	
\title{Embedding $\K$-algebras into Leavitt algebra $L_\K(1, 2)$}

\author{Boris Bilich}
\address{(Boris Bilich): Department of Mathematics, University of Haifa, Haifa, Israel.
}
\address{Mathematisches Institut, Universit\"at G\"ottingen, G\"ottingen,
Germany
}
\email{bilichboris1999@gmail.com}

\author{Roozbeh Hazrat}
\address{(Roozbeh Hazrat): Centre for Research in Mathematics and Data Science, Western Sydney University, 
			Australia.
}
\email{r.hazrat@westernsydney.edu.au}
	
\author{Tran Giang~Nam}
\address{(Tran Giang Nam): Institute of Mathematics, VAST, 18 Hoang Quoc Viet, Nghia Do, Hanoi, Vietnam.}
\email{tgnam@math.ac.vn}

\begin{abstract} Since the commutative monoid $T = (\{0, 1\}, \vee)$ is a weak terminal object in the category of  conical monoids with order units, there is a unital homomorphism from  every Bergman $\K$-algebra corresponding to a conical finitely generated commutative monoid   into the Leavitt algebra $L_\K(1,2)$, where $\K$ is a field. This fact will be used to give a short proof that Leavitt path algebras associated with finite graphs with condition $(L)$  embed into $L_\K(1,2)$, as well as provide criteria for an embedding of $\M_s(L_{\K}(1, m))$ in $\M_s(L_{\K}(1, n))$. As our second main result, we show that the Heisenberg equation $xy-yx=1$ cannot be realized in any Steinberg algebra, implying that the first Weyl algebra cannot be embedded into $L_\K(1,2)$, giving an affirmative answer to a question of  Brownlowe and S\o rensen on the embeddability of $\K$-algebras  with a countable basis inside $L_\K(1,2)$.  Whereas, $L_\K(E)$ cannot be graded-embedded into $L_\K(1,2)$ in general, in the final section we show that $L_\K(E)$ does admit a graded embedding into $L_\K(1,2)\otimes_\K L_\K(1,2)$.

\bigskip	
\textbf{Mathematics Subject Classifications 2020}: 16S88, 16S99,  05C25\medskip

\textbf{Key words}: Leavitt path algebra; Steinberg algebra; Weyl algebra.
\end{abstract}

\maketitle

\section{Introduction}

 Let $\K$ be a field, and $m, n$ positive integers with $m < n$.  Let $A=(x_{ij})$ and $B=(y_{ji})$, where $1\leq i \leq m$ and $1\leq j \leq n$, be a collection of symbols considered as $m\times n$ and $n\times m$ matrices, respectively.  The Leavitt algebras are defined as follows:  
\begin{equation}\label{firstdef}
L_\K(m,n) \  := \  \K\big\langle A, B\big\rangle\big /  \big\langle A B = I_m,  \ BA=I_n \big\rangle,
\end{equation}
\noindent
where $\K\langle A, B\rangle$ is the free associative (noncommutative) unital ring generated by symbols $x_{ij}$ and $y_{ji}$ with coefficients in $\K$. Furthermore,  $AB=I_m$ and $BA=I_n$ stand for the collection of relations after multiplying the matrices and comparing the two sides of the equations. These algebras were considered by Leavitt \cite{leavitt1, leavitt2} in the context of classification of algebras without invariant basis number. Moreover, it was shown that $L_\K(1,n)$ is a (purely infinite) simple ring, while  $L_\K(m,n)$, $m>1$,  has no zero divisors.

Of particular interest is  the Leavitt algebra 
\[L_\K(1,2) = \K \langle x_1, x_2, x_1^*, x_2^* \rangle / \langle x_1 x_1^* + x_2 x_2^*=1, \,  x_1^* x_1= x_2^* x_2=1, \, x_1^*x_2 = x_2^*x_1 = 0\rangle,\]  which is  the discrete version of Cuntz algebra $\mathcal O_2$ and which has become a focal point of significant research interest. As an example, the infinite, finitely presented simple Thompson group $V$ is contained in the group of units of $L_\K(1,2)$~\cite{lpabook, sorensen}.  
Graph $C^*$-algebras have become their own industry in $C^*$-algebra theory and have recently been used in the ongoing classification of $C^*$-algebras program (see, e.g., \cite{rss2015}). Inspired by the success of graph $C^*$-algebras, Abrams and Aranda-Pino \cite{GenePino} and, independently, Ara, Moreno and Pardo \cite{ara2006} generalized Leavitt algebras $L_{\K}(1, n)$ by introducing Leavitt path algebras of directed graphs. One of the main topics in Leavitt path algebras has been a desire to get a purely algebraic version of the celebrated classification theorem of Kirchberg and Phillips.

In light of the Kirchberg's celebrated embedding Theorem that all separable, exact $C^*$-algebras embed into $\mathcal O_2$, in~\cite[Theorem 4.1]{brownsor}  Brownlowe and S\o rensen proved that any Leavitt path algebra of a countable graph can be embedded into $L_\K(1,2)$, and they raised the following question~\cite[p.~349]{brownsor}.  

\begin{ques}\label{sorensen}
Let $\K$ be a field. Does there exist a $\K$-algebra $A$ with a countable basis
such that $A$ does not embed into $L_\K(1,2)$?
\end{ques}

They noted ``The authors have been unable to come up with any such $A$, but we would find an
affirmative answer to the above question quite surprising''.

In this article,  we give an affirmative answer to Question 4.7, by showing that the first Weyl algebra  $\mathbb A_\K=\K\langle x, y\rangle/\langle xy-yx- 1\rangle$ cannot be embedded into the Leavitt algebras $L_\K(m,n)$ (Theorem \ref{globalmain}). This naturally raises the question of which classes of algebras can and cannot be embedded into $L_\K(1,2)$. Note that the Weyl algebra is a left/right Noetherian simple ring having no zero divisors and IBN property. 

Also, using Bergman's machinery~\cite{Berg74}, we give criteria for an embedding of $\M_s(L_{\K}(1, m))$ into $\M_s(L_{\K}(1, n))$ (Theorem \ref{LAlg-Embed} and Corollary \ref{LAlg-Embed1}), and provide a short proof that Leavitt path algebras associated with finite graphs with condition $(L)$  embed into $L_\K(1,2)$ (Theorem \ref{sLPA-embed}).

Given that Leavitt path algebras have a canonical $\mathbb Z$-grading which plays a pivotal role in their structures, we consider a graded version of Kirchberg embedding theorem. Whereas, $L_\K(E)$ cannot be graded-embedded into $L_\K(1,2)$ in general (e.g., there is no graded unital homomorphism $L_\K(1,3) \rightarrow L_\K(1,2)$), using Brownlowe and S\o rensen's $*$-preserving embedding of Leavitt path algebras of countable row-finite graphs into $L_\K(1, 2)$ (\cite[Theorem 4.1]{brownsor}), we obtain that these algebras can be graded-embedded into $L_\K(1,2)\otimes_\K L_\K(1,2)$ (Theorem \ref{grad-embed}), where 
the grading given by usual canonical $\mathbb Z$-grading on the first component and 
the trivial grading on the second component.

\section{Unital embeddings of Bergman algebras}
 
Brownlowe and S\o rensen's Theorem~\cite{brownsor} proved that  Leavitt path algebras embed into $L_\K(1,2)$. Their approach is to produce infinite number of orthogonal idempotents inside $L_\K(1,2)$ which match the structure of Leavitt path algebras $L_\K(E)$, for any graph $E$. Using Bergman's monoid realization work~\cite{Berg74}, we first 
provide criteria for an embedding of $\M_s(L_{\K}(1, m))$ in $\M_s(L_{\K}(1, n))$ (Theorem \ref{LAlg-Embed} and Corollary \ref{LAlg-Embed1}), and give a very short proof of embedding of $L_\K(E)$ inside $L_\K(1,2)$ when the graphs have condition (L), i.e., cycles have exits (Theorem \ref{sLPA-embed}). Consequently, we obtain that the Cohn path algebra of a finite graph can be embedded into the Leavitt algebra $L_{\K}(1, 2)$ (Corollary \ref{CPA-embed}).
  

We begin this section by recalling some basic notions of the commutative monoid theory.

\begin{defn}[{cf. \cite[Definition 2.5]{ANP2017}}]\label{monoiddefs}
Let $M$ be a commutative monoid (i.e., $M$ is a set, and $+$ is an associative commutative binary operation on $M,$ with a neutral element).   
	
(1)  We define a relation $\leq$ on $M$ by setting
$$x\leq y \ \ \mbox{in case there exists }  z\in M  \ \mbox{for which} \ x+ z = y.$$ Then $\leq$ is a preorder (reflexive and transitive).  
	
(2) $M$ is called {\it conical} if $x + y = 0$ implies  $x=y=0$ for all $x, y\in M$.
	
(3) An element $d\in M$ is called an \emph{order-unit} if, for any $x\in M,$  there exists a positive integer $n$ such that $x\leq nd$.	

(4) Let $(M, d)$ and $(M', d')$ be commutative monoids with order-units $d$ and $d'$, respectively. A {\it homomorphism} from $(M, d)$ to $(M', d')$ is a monoid homomorphism $f: M\longrightarrow M'$ satisfying $f(d) = d'$.
\end{defn}

For any unital  ring $R$, we denote by $\mathcal{V}(R)$ the set of isomorphism classes (denoted by $[P]$) of finitely generated projective right $R$-modules, and we endow $\mathcal{V}(R)$ with a structure of a commutative monoid by imposing the operation $$[P] + [Q] = [P\oplus Q]$$ for isomorphism classes $[P]$ and $[Q]$. 
It is not hard to check that $\mathcal{V}(R)$ is a conical monoid and $[R]$ is an order-unit in $\mathcal{V}(R)$. If $\phi: R\longrightarrow S$ is a homomorphism of unital rings, then $\phi$ induces a well-defined \emph{pointed} monoid homomorphism $$\mathcal{V}(\phi): (\mathcal{V}(R), [R]) \longrightarrow (\mathcal{V}(S), [S])$$ such that $\mathcal{V}(\phi)([P]) = [S\otimes_RP]$ for all $[P]\in \mathcal{V}(R)$. 

We recall that a unital ring $R$ is called {\it right hereditary} if every right ideal of $R$ is projective.

A fundamental and striking theorem of George Bergman \cite{Berg74} shows that a finitely generated conical commutative monoid can be realized as the $\mathcal{V}$-monoid of a (left and right) hereditary ring which has a certain universal property: 

\begin{thm}[{\cite[Theorem 6.2]{Berg74}}]\label{Bergman}
Let $\K$ be a field and $M$	a finitely generated conical commutative monoid with an order-unit $d$. Then, there exists a (left and right) hereditary unital $\K$-algebra $B$ such that $(\mathcal{V}(B), [B])\cong (M, d)$. Moreover, $B$ can be taken to have the weak universal property that for any $\K$-algebra $A$ and any homomorphism $\psi: (M, d)\longrightarrow (\mathcal{V}(A), [A])$, there exists a $\K$-algebra homomorphism $\phi: B \longrightarrow A$ such that $\mathcal{V}(\phi) = \psi$.
\end{thm}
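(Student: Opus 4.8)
The plan is to realize $B$ as the outcome of a finite chain of Bergman's universal ring constructions applied to a semisimple seed, and then to correct the order unit by a Morita equivalence. Since $M$ is finitely generated and commutative it is finitely presented (R\'edei's theorem), so fix generators $x_1,\dots,x_n$ and finitely many relations $r_j=s_j$ ($1\le j\le m$), where $r_j,s_j\in\N^n$ are read as $\N$-linear combinations of the $x_i$. Put $g:=x_1+\dots+x_n$; since every element of $M$ is an $\N$-combination of the $x_i$, it is dominated by a multiple of $g$, so $g$ is itself an order unit of $M$. Take as seed the semisimple $\K$-algebra $B_0=\K^n$: it is (trivially) left and right hereditary, $\mathcal V(B_0)\cong\N^n$ is free on the classes $[e_iB_0]$ of the coordinate idempotents, and $[B_0]=\sum_i[e_iB_0]$, which maps to $g$ under the eventual identification $\N^n/\langle r_j=s_j\rangle=M$.

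Next I would impose the relations one at a time via Bergman's universal adjunction of an isomorphism between finitely generated projective modules. If $R$ is the current $\K$-algebra, with distinguished projectives $e_1R,\dots,e_nR$ realizing the images of the $x_i$, then for the relation $r_j=s_j$ one sets $P_j=\bigoplus_i(e_iR)^{(r_j)_i}$ and $Q_j=\bigoplus_i(e_iR)^{(s_j)_i}$ and lets $R'=R\langle\,\theta\colon P_j\xrightarrow{\sim}Q_j\,\rangle$ be the $\K$-algebra obtained from $R$ by adjoining matrices of new generators representing an isomorphism $\theta$ and its inverse, subject to the evident idempotency and inversion relations. The results of \cite{Berg74} on such constructions give: (i) $R'$ is again left and right hereditary; (ii) the canonical map $\mathcal V(R)\to\mathcal V(R')$ is surjective and exhibits $\mathcal V(R')$ as the quotient of $\mathcal V(R)$ by the monoid congruence generated by the single pair $([P_j],[Q_j])$; and (iii) $R\to R'$ is universal, i.e. any $\K$-algebra map $\rho\colon R\to A$ with $A\otimes_RP_j\cong A\otimes_RQ_j$ extends, for any prescribed such isomorphism, to a $\K$-algebra map $R'\to A$ sending $\theta$ to that isomorphism. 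Iterating over $j=1,\dots,m$ produces a left and right hereditary $\K$-algebra $B'$ with $\mathcal V(B')\cong\N^n/\langle r_j=s_j:1\le j\le m\rangle\cong M$ and $[B']$ corresponding to $g$. Since $g=[B']$ and $d$ are both order units of $\mathcal V(B')\cong M$, we have $d\le N[B']$, so $d=[P]$ for a finitely generated projective right $B'$-module $P$ which is a direct summand of $(B')^N$; and $[B']\le N'[P]$ because $g$ is an order unit, so $P$ is a progenerator and $B:=\operatorname{End}_{B'}(P)$ is Morita equivalent to $B'$. Morita equivalence preserves left and right hereditariness and induces an isomorphism $\mathcal V(B)\cong\mathcal V(B')$ sending $[B]$ to $[P]=d$; hence $(\mathcal V(B),[B])\cong(M,d)$, which is the first assertion.

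For the weak universal property, let $A$ be a $\K$-algebra and $\psi\colon(M,d)\to(\mathcal V(A),[A])$ a pointed homomorphism. Performing the same corner construction on the $A$-side --- legitimate because $g$ and $d$ are order units of $M$, so $\psi(g)$ and $[A]$ dominate multiples of one another --- I would replace $A$ by a Morita-equivalent ring $\widetilde A$ whose class $[\widetilde A]$ corresponds, under $\mathcal V(\widetilde A)\cong\mathcal V(A)$, to $\psi(g)$. Since $\sum_i\psi(x_i)=\psi(g)=[\widetilde A]$, a chosen isomorphism $\bigoplus_iP^A_i\cong\widetilde A$ with $[P^A_i]=\psi(x_i)$ decomposes $\widetilde A$ into orthogonal idempotents $f_i$ with $[f_i\widetilde A]=\psi(x_i)$, giving a $\K$-algebra map $B_0\to\widetilde A$ that realizes $\psi$ on the $x_i$. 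This extends over each universal-isomorphism step: at the relation $r_j=s_j$ the images of $[P_j]$ and $[Q_j]$ already coincide in $M$, so $\widetilde A\otimes_RP_j$ and $\widetilde A\otimes_RQ_j$ have the same class in $\mathcal V(\widetilde A)$ and are therefore isomorphic (isomorphism classes being, by definition, the elements of $\mathcal V$), and property (iii) supplies the extension. After $m$ steps one gets $B'\to\widetilde A$ inducing $\psi$ on $\mathcal V$; transporting this map back through the two Morita contexts yields $\phi\colon B\to A$, and tracing the equivalences confirms $\mathcal V(\phi)=\psi$, since both then agree on the $x_i$, which generate $M$.

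The main obstacle --- which here I would cite rather than reprove --- is Bergman's structural analysis of modules over coproducts of rings underlying (i)--(iii); this is the technical heart of \cite{Berg74}. Granting it, the only remaining delicate point is threading the module-theoretic data in $A$ through the monoid homomorphism $\psi$ and the Morita equivalences of the last two steps, and that is where most of the bookkeeping would go.
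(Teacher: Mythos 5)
The paper gives no proof of this statement---it is imported verbatim from \cite[Theorem 6.2]{Berg74}---so the only comparison available is with Bergman's original argument, and your outline is essentially that argument: present $M$ by generators and relations (R\'edei), seed with $\K^n$ so that $\mathcal{V}\cong\N^n$, impose each relation by universally adjoining an isomorphism between the corresponding finitely generated projectives, and correct the order unit by passing to a full corner (Morita context). Your reconstruction is sound, with the genuinely hard content residing exactly where you say it does (Bergman's coproduct/adjunction analysis supplying your (i)--(iii), i.e.\ preservation of hereditariness, the identification of $\mathcal{V}$ of the new ring with the congruence quotient, and the extension property), and the remaining Morita bookkeeping in the universal-property step (transporting $B'\to\widetilde{A}$ to $\phi\colon B\to A$ via endomorphism rings of the induced progenerators) is routine and checks out.
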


We say that the hereditary $\K$-algebra  $B$, introduced in Theorem \ref{Bergman}, is the {\it Bergman $\K$-algebra corresponding to $(M, d)$} (\cite{Genenotices,lpabook}).

As a first application of Theorem~\ref{Bergman}, we obtain an algebraic generalization of Kawamura's result~\cite[Lemma 2.1]{Kawa} on the embedibilty $\mathcal O_m \rightarrow \mathcal O_n$ and its generalization $\mathcal O_m \rightarrow \M_t(\mathcal O_n)$ by Hajac and Liu in \cite[Theorem 4.5]{HajLiu}.

Let $\K$ be a field, and $m$ and $m'$ positive integers with $m < m'$. Let $M_{m, m'}$ be the commutative (monogenic) monoid
\begin{center}
$M_{m, m'} := \{0, x, \hdots, mx, \hdots, (m'-1)x\}$ with relation $mx = m'x$.
\end{center}
That means, $$M_{m, m'}\cong \mathbb{N}/\langle m =m'\rangle,$$ 
where $\langle m =m'\rangle$ is the congruence on $\mathbb{N}$ generated by $(m, m')$ (see \cite[Chapter~1]{howie}). 
We call the elements $\{0,x,\hdots, (m-1)x\}$ the \emph{tail} of the monoid and $\{mx, \hdots, (m'-1)x\}$ the \emph{periodic} part of $M_{m, m'}$ which coincides with the group completion of
the monoid $M_{m,m'}$. Note that for $s,t \in \mathbb N$, if $s=t$ in $M_{m,m'}$, then $s=t<m$ (i.e., they appear in the tail) or $m'-m\mid s-t$.

For any positive integer $s$, by \cite[Theorems 6.1 and 7.3]{Berg74} (see also \cite[Theorem~5.7]{GeneRooz}), we have 
\begin{equation}\label{monformula}
    \left (\mathcal{V}(\M_s(L_{\K}(m, m'))), [\M_s(L_{\K}(m, m'))]\right )\cong (M_{m, m'}, s)
\end{equation}
 and $\M_s(L_{\K}(m, m'))$ has the weak universal property. 
 
Using these notes and Theorem \ref{Bergman}, we obtain the following interesting result.

\begin{thm}\label{LAlg-Embed}
Let $\K$ be a field and let $n <  n'$, $m< m'$, $s$ and $t$ be positive integers. Then, there is a unital homomorphism \[\phi: \M_s(L_{\K}(m,m')) \longrightarrow \M_t(L_{\K}(n,n'))\] if and only if there exist positive integers $k$ and $k'$ such that 
$ks=t<n$, or
$ks\ge n,\ t\ge n$, as well as  $ks\equiv t \mod(n'-n)$, 
 \(km\ge n\), and \( (m' - m)k = (n' - n)k'\).

\begin{proof}
($\Longrightarrow$) 
Using~(\ref{monformula}), the ring homomorphism $\phi$ induces a monoid homomorphism
\(\overline \varphi:M_{m,m'}\to M_{n,n'}\) such that $\overline \phi(s)=t$. Let $\overline \phi(1)=k$, for some \(k> 0\).
Then
$ks=\overline \varphi(s)=t$ in the monoid $M_{n,n'}$. If $t<n$, then $ks=t$. Otherwise $t\geq n$ and thus $ks\ge n$ and 
$ks\equiv t \mod(n'-n)$. Since $m=m'$ in $M_{m,m'}$, then $km=\overline\phi(m)=\overline\phi(m')=km'$. Thus $km\geq n$ and $(m' - m)k = (n' - n)k'$.

($\Longleftarrow$) If the conditions in the Theorem holds, the assignment
$\overline \varphi:M_{m,m'}\to M_{n,n'}; 1\mapsto k$ defines a well-defined homomorphism with
\(\bar \varphi(s)=t\). Since the algebra $\M_s(L_\K(m,m'))$ is the Bergman algebra which realizes $M_{m,m'}$ with the order unit $s$,  
the homomorphism $\phi$ follows from
Theorem~\ref{Bergman}, thus finishing the proof.
\end{proof}    
\end{thm}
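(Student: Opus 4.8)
The plan is to transport the question through the functor $\mathcal{V}$ and then read the numerical conditions off the combinatorial structure of the monoids $M_{m,m'}$ and $M_{n,n'}$. By \eqref{monformula} I identify $(\mathcal{V}(\M_s(L_\K(m,m'))),[\M_s(L_\K(m,m'))])$ with $(M_{m,m'},s)$ and $(\mathcal{V}(\M_t(L_\K(n,n'))),[\M_t(L_\K(n,n'))])$ with $(M_{n,n'},t)$; moreover $\M_s(L_\K(m,m'))$ is the Bergman $\K$-algebra realizing $(M_{m,m'},s)$, so it has the weak universal property of Theorem~\ref{Bergman}. Thus the existence of a \emph{unital} homomorphism $\M_s(L_\K(m,m'))\to\M_t(L_\K(n,n'))$ should be equivalent to the existence of a homomorphism of order-unit monoids $(M_{m,m'},s)\to(M_{n,n'},t)$, and the latter is completely elementary because $M_{m,m'}$ is monogenic.

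For the forward direction I would apply $\mathcal{V}$ to a unital homomorphism $\phi$ to get a pointed monoid homomorphism $\overline{\phi}\colon(M_{m,m'},s)\to(M_{n,n'},t)$, so in particular $\overline{\phi}(s)=t$. Since $M_{m,m'}$ is generated by the class $1$, $\overline{\phi}$ is determined by $k:=\overline{\phi}(1)$, and $k\ge 1$ because otherwise $\overline{\phi}(s)=0\ne t$ (here $t$ is a nonzero element of the conical monoid $M_{n,n'}$). Now $\overline{\phi}(s)=t$ reads $ks=t$ in $M_{n,n'}$, which by the description of equality recalled before the statement means either $ks=t<n$, or $ks\ge n$, $t\ge n$ and $ks\equiv t\pmod{n'-n}$. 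Applying $\overline{\phi}$ to the defining relation $m=m'$ of $M_{m,m'}$ gives $km=km'$ in $M_{n,n'}$; since $km<km'$ these cannot both lie in the tail, which forces $km\ge n$ and $n'-n\mid km'-km=k(m'-m)$, i.e. $(m'-m)k=(n'-n)k'$ for some positive integer $k'$. These are exactly the asserted conditions.

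For the converse, given $k$ and $k'$ as stated, I would define $\overline{\phi}\colon M_{m,m'}\to M_{n,n'}$ by sending the generator $1$ to $k$ and extending additively; this is well defined precisely because it respects $m=m'$, which holds since $km\ge n$ and $km'-km=k(m'-m)=k'(n'-n)$ is a multiple of $n'-n$. In either case of the hypothesis one checks $\overline{\phi}(s)=ks=t$ in $M_{n,n'}$, so $\overline{\phi}$ is a morphism of order-unit monoids $(M_{m,m'},s)\to(M_{n,n'},t)$. Feeding $A=\M_t(L_\K(n,n'))$ (with $\mathcal{V}(A)\cong(M_{n,n'},t)$) and $\psi=\overline{\phi}$ into the weak universal property of $\M_s(L_\K(m,m'))$ supplied by Theorem~\ref{Bergman} then produces a $\K$-algebra homomorphism $\phi$ with $\mathcal{V}(\phi)=\overline{\phi}$; since $\overline{\phi}$ is pointed, $\phi$ is unital, which completes the argument.

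The genuinely non-formal step — and the one I would write out most carefully — is the bookkeeping in $M_{n,n'}$: pinning down exactly when $ks=t$ and when $km=km'$ hold there, which is governed by the tail-versus-periodic-part dichotomy stated just above the theorem. Everything else is a formal manipulation of $\mathcal{V}$ and the universal property; the two minor points worth a sentence each are that $k=\overline{\phi}(1)$ is strictly positive (using $t\ne 0$ in the conical monoid $M_{n,n'}$) and that the homomorphism produced by Theorem~\ref{Bergman} is unital, which is forced by $\overline{\phi}$ being a morphism of order-unit monoids.
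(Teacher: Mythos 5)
Your proposal is correct and follows essentially the same route as the paper: apply $\mathcal{V}$ to reduce to pointed homomorphisms of the monogenic monoids $(M_{m,m'},s)\to(M_{n,n'},t)$, extract the numerical conditions from the image $k$ of the generator and the tail/periodic dichotomy, and invoke the weak universal property of Theorem~\ref{Bergman} for the converse. If anything, you are slightly more explicit than the paper on two small points (why $k>0$ and why $km=km'$ in $M_{n,n'}$ forces $km\ge n$), which is a welcome addition rather than a divergence.
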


Consequently, we have the following corollary.

\begin{cor}\label{LAlg-Embed1}
Let $\K$ be a field and let $n,\, m,\, s$ and $t$ be positive integers such that $1 < m$ and $1 < n$. Then, there is a unital injective homomorphism \[\M_s(L_{\K}(1, m)) \longrightarrow \M_t(L_{\K}(1, n))\] if and only if there exist positive integers $k$ and $k'$ such that $(m - 1)k = (n - 1)k'$ and $ks \equiv t \mod{(n - 1)}$.
\end{cor}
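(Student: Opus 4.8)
The plan is to deduce Corollary~\ref{LAlg-Embed1} from Theorem~\ref{LAlg-Embed} by specializing $m' = m$ and $n' = n$ in the theorem, i.e.\ taking the Leavitt algebras $L_\K(1,m)$ and $L_\K(1,n)$, and then checking that the list of conditions in the theorem collapses to the two stated congruence/divisibility conditions. First I would record that with $m' - m = m - 1 > 0$ and $n' - n = n - 1 > 0$ (using $1 < m$ and $1 < n$), the divisibility condition $(m'-m)k = (n'-n)k'$ becomes exactly $(m-1)k = (n-1)k'$, and the congruence $ks \equiv t \pmod{n'-n}$ becomes $ks \equiv t \pmod{n-1}$.

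The next step is to show that the remaining clauses in Theorem~\ref{LAlg-Embed} are automatically satisfied (given $k, k'$ as above), so they impose no extra constraint. The theorem offers the dichotomy ``$ks = t < n$, or $ks \ge n,\ t \ge n$ together with the congruence and $km \ge n$.'' I would argue that the first alternative $ks = t < n$ is actually subsumed: if $ks = t$ then trivially $ks \equiv t \pmod{n-1}$, so once one grants the congruence one can work uniformly in the second branch, and the only genuine side condition to dispose of is $km \ge n$. Here is where the hypothesis $1 < m$ enters decisively: from $(m-1)k = (n-1)k'$ with $k' \ge 1$ we get $(m-1)k \ge n - 1$, hence $km \ge k + (n-1) \ge n$ since $k \ge 1$. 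Thus $km \ge n$ is a consequence of the divisibility relation, not an independent hypothesis, and the list in Theorem~\ref{LAlg-Embed} reduces precisely to the two conditions in the corollary. Conversely, any $k, k'$ witnessing the corollary's conditions also witness the theorem's conditions, so the biconditional transfers.

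Finally, I would note that Theorem~\ref{LAlg-Embed} as stated produces a unital homomorphism, whereas the corollary claims a unital \emph{injective} homomorphism; so one must additionally observe injectivity. This follows because $\M_s(L_\K(1,m))$ is (Morita equivalent to) a simple ring --- $L_\K(1,m)$ is simple for $m > 1$, hence so is every matrix ring over it --- so any nonzero unital ring homomorphism out of it is automatically injective, and a unital homomorphism is certainly nonzero. I expect the main obstacle to be purely bookkeeping: making sure the two-branch dichotomy of Theorem~\ref{LAlg-Embed} really does collapse cleanly, in particular that the ``$km \ge n$'' clause is genuinely implied rather than needing to be carried along, and that the edge cases (e.g.\ $k = 1$, or $t < n$) are handled so that the monoid homomorphism $M_{1,m} \to M_{1,n}$, $1 \mapsto k$, is well defined and sends $s \mapsto t$. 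None of this is deep, but it is the only place where a slip could occur.
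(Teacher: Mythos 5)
Your proposal follows exactly the paper's route: the published proof of this corollary is the single sentence that it ``immediately follows from Theorem~\ref{LAlg-Embed} and the fact that $L_{\K}(1,m)$ is simple,'' and your use of the simplicity of $\M_s(L_\K(1,m))$ to upgrade ``unital homomorphism'' to ``unital injective homomorphism'' is precisely the intended point. One bookkeeping correction, however. When you specialize Theorem~\ref{LAlg-Embed} to $\M_s(L_\K(1,m))\to\M_t(L_\K(1,n))$, the theorem's \emph{lower} parameters both become $1$ (its $m$ is $1$ and its $m'$ is your $m$; likewise its $n$ is $1$ and its $n'$ is your $n$). So the side conditions read $ks=t<1$ (an empty branch), $ks\ge 1$, $t\ge 1$, and $k\cdot 1\ge 1$ (all automatic), and the dichotomy collapses for free, leaving only the congruence modulo $n-1$ and the relation $(m-1)k=(n-1)k'$. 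Your derivation of $km\ge n$ from $(m-1)k=(n-1)k'$ is a correct inequality but is aimed at the wrong clause --- the theorem requires $k\,m_{\mathrm{thm}}\ge n_{\mathrm{thm}}$, i.e.\ $k\ge 1$ --- and, more seriously, under your reading the clauses $ks\ge n$ and $t\ge n$ (with your $n$) would remain unaddressed and can genuinely fail (take $t<n$ with $ks\ne t$ but $ks\equiv t \bmod (n-1)$), so the collapse as you describe it would not go through literally. With the correct substitution none of this arises and the reduction is exactly the two conditions of the corollary; the rest of your argument, including the injectivity step, matches the paper.
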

\begin{proof}
It immediately follows from Theorem \ref{LAlg-Embed} and the fact that $L_{\K}(1, m)$ is simple.
\end{proof}

A (directed) graph $E = (E^0, E^1, s, r)$ 
consists of two disjoint sets $E^0$ and $E^1$, called \emph{vertices} and \emph{edges}
respectively, together with two maps $s, r: E^1 \longrightarrow E^0$.  The
vertices $s(e)$ and $r(e)$ are referred to as the \emph{source} and the \emph{range}
of the edge~$e$, respectively. A graph $E$ is \emph{finite} if both sets $E^0$ and $E^1$ are finite.  A graph $E$ is \emph{row-finite} if $|s^{-1}(v)| < \infty$ for all $v\in E^0$. A vertex~$v$ is \emph{regular}
if $0 < |s^{-1}(v)| < \infty$.
A \emph{path} $p = e_{1} \cdots e_{n}$ in a graph $E$ is a sequence of
edges $e_{1}, \dots, e_{n}$ such that $r(e_{i}) = s(e_{i+1})$ for $i
= 1, \dots, n-1$.  
A path $p$ is called a \emph{cycle} if $s(p) = r(p)$, and $p$ does not revisit
any other vertex. 
An edge~$f$ is an \emph{exit} for a path $p = e_{1} \cdots e_{n}$ if $s(f) = s(e_{i})$
but $f \ne e_{i}$ for some $1 \le i \le n$. We say that $E$ satisfies {\it condition} $(L)$ if every cycle in $E$ has an exit.

Let $E=(E^0,E^1,r,s)$ be a finite  graph. The {\it graph monoid} $M_E$ is defined as the free commutative monoid on a set of generators $E^0$, modulo relations by
\[ v = \sum_{e\in s^{-1}(v)} r(e)\]
for each regular vertex $v$. 

The notion of a Cohn path algebra has been defined and investigated by Ara and Goodearl \cite{AraGoodearl}
(see, also, \cite{lpabook}). Specifically, for an arbitrary graph $E = (E^0,E^1,s,r)$
and an arbitrary unital ring $R$, the \emph{Cohn path algebra} $C_{R}(E)$ of the graph~$E$
\emph{with coefficients in}~$R$ is the $R$-algebra generated
by the sets $E^0$ and $E^1$, together with a set of variables $\{e^{*}\ |\ e\in E^1\}$,
satisfying the following relations for all $v, w\in E^0$ and $e, f\in E^1$:
\begin{itemize}
\item[(1)] $v w = \delta_{v, w} w$;
\item[(2)] $s(e) e = e = e r(e)$ and
$r(e) e^* = e^* = e^*s(e)$;\item[(3)] $e^* f = \delta_{e, f} r(e)$,
\end{itemize}
where $\delta$ is the Kronecker delta. 

Let $I$ be the ideal of $C_{R}(E)$ generated by all elements of the form
$v-\sum_{e\in s^{-1}(v)}ee^*$, where $v$ is a regular vertex. Then the $R$-algebra $C_{R}(E)/I$ is called the \emph{Leavitt path algebra}
of $E$ with coefficients in $R$, denoted by $L_{R}(E)$.

Typically  the Leavitt path algebra $L_{R}(E)$ is defined without reference to Cohn
path algebras, rather, it is defined as the $\K$-algebras are generated by
the set $\{v, e, e^*\ |\ v\in E^0, e\in E^1\}$,
which satisfy the  above conditions (1), (2), (3), and the additional condition:
\begin{itemize}
\item[(4)] $v= \sum_{e\in s^{-1}(v)}ee^*$ for any  regular vertex $v$.
\end{itemize}
\medskip

We are in a position to show that Leavitt path algebras associated to graphs with condition $(L)$
embed into $L_\K(1,2)$. This re-establishes the Brownlowe and S\o rensen result more directly (but does not explicitly give the injective homomorphism).

\begin{thm}\label{sLPA-embed}
Let $\K$ be a field, $R$ a unital $\K$-algebra and $E$ be a finite graph with condition $(L)$. Then, $L_R(E)$ can be embedded into the Leavitt algebra $L_R(1, 2)$. 
\end{thm}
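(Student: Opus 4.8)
The plan is to reduce to the case $R=\K$ and then run everything through Bergman's Theorem~\ref{Bergman}; the only ``external'' input beyond the excerpt is the Cuntz--Krieger Uniqueness Theorem for Leavitt path algebras. First I would dispose of the coefficient ring: since (as is standard) $L_R(E)\cong R\otimes_\K L_\K(E)$ and $L_R(1,2)\cong R\otimes_\K L_\K(1,2)$ as rings, and $R$ is free (hence flat) as a $\K$-module, tensoring any injective $\K$-algebra homomorphism $L_\K(E)\hookrightarrow L_\K(1,2)$ with $R$ over $\K$ produces an injective ring homomorphism $L_R(E)\hookrightarrow L_R(1,2)$. So it suffices to embed $L_\K(E)$ into $L_\K(1,2)$.

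Next I would record that $T=(\{0,1\},\vee)$ is a weak terminal object among conical commutative monoids with order-units: for such an $(M,d)$ (with $M\ne\{0\}$, so $d\ne 0$) the map $\pi_M\colon M\to T$ sending $0$ to $0$ and every nonzero element to $1$ is a monoid homomorphism, because conicality forces $m+m'\ne 0$ as soon as $m\ne 0$ or $m'\ne 0$, and clearly $\pi_M(d)=1$. By~\eqref{monformula} with $(m,m',s)=(1,2,1)$ we have $(\mathcal{V}(L_\K(1,2)),[L_\K(1,2)])\cong(M_{1,2},1)=(T,1)$, and moreover $L_\K(1,2)$ is a Bergman algebra realizing $(T,1)$. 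I would then invoke the known fact that, for a finite graph $E$ with condition $(L)$, the algebra $L_\K(E)$ is the Bergman $\K$-algebra corresponding to $(M_E,d_E)$ with $d_E=\sum_{v\in E^0}v$; in particular $(\mathcal{V}(L_\K(E)),[L_\K(E)])\cong(M_E,d_E)$ and $L_\K(E)$ has the weak universal property of Theorem~\ref{Bergman}. Feeding $\psi=\pi_{M_E}\colon(M_E,d_E)\to(\mathcal{V}(L_\K(1,2)),[L_\K(1,2)])\cong(T,1)$ into that universal property yields a $\K$-algebra homomorphism $\phi\colon L_\K(E)\to L_\K(1,2)$ with $\mathcal{V}(\phi)=\pi_{M_E}$.

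Finally I would establish injectivity of $\phi$. For each vertex $v$ the class $[vL_\K(E)]$ corresponds to $v\ne 0$ in $M_E$, so $[\phi(v)L_\K(1,2)]=\mathcal{V}(\phi)([vL_\K(E)])=\pi_{M_E}(v)=1\ne 0$ in $T$, whence $\phi(v)\ne 0$. Since $E$ satisfies condition $(L)$, the Cuntz--Krieger Uniqueness Theorem over the field $\K$ (see, e.g., \cite{lpabook}) then forces $\phi$ to be injective, and the base-change reduction completes the proof.

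I expect the real content to lie precisely at the injectivity step: Bergman's machinery only ever produces homomorphisms, so the construction of $\phi$ is essentially automatic once $T$ is recognized as weak terminal, whereas condition $(L)$ is needed twice for injectivity — once to identify $L_\K(E)$ as a Bergman algebra with the weak universal property (which fails without an exit condition, e.g. for a single loop, where $L_\K(E)=\K[x,x^{-1}]$ is not the Bergman algebra of $\mathbb N$), and, decisively, in the Cuntz--Krieger Uniqueness Theorem that upgrades ``$\phi(v)\ne 0$ for all $v$'' to injectivity. I would also double-check the two routine points that $v\ne 0$ in $M_E$ for every vertex $v$ and that the base-change maps $L_R(-)\cong R\otimes_\K L_\K(-)$ are honest ring isomorphisms.
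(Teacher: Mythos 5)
Your proposal is correct and follows essentially the same route as the paper: recognize $T=(\{0,1\},\vee)$ as weak terminal among conical monoids with order-unit, realize $L_\K(1,2)$ as the Bergman algebra of $(T,1)$ and $L_\K(E)$ as that of $(M_E,d_E)$, obtain $\phi$ from the weak universal property, deduce $\phi(v)\neq 0$ from $\mathcal{V}(\phi)=\psi$, conclude injectivity via the Cuntz--Krieger Uniqueness Theorem, and base-change by flatness of $R$ over $\K$. One small inaccuracy in your closing commentary: the identification of $L_\K(E)$ as the Bergman algebra of $(M_E,d_E)$ with the weak universal property holds for \emph{every} finite (indeed row-finite) graph by Ara--Moreno--Pardo, with no exit hypothesis (for a single loop, $\K[x,x^{-1}]$ \emph{is} the Bergman algebra realizing $(\mathbb{N},1)$, obtained by adjoining a universal isomorphism $v L\cong v L$), so condition $(L)$ is used only once, in the Cuntz--Krieger uniqueness step.
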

\begin{proof}
Let $M$	be a finitely generated conical commutative monoid with an order-unit $d$, and $B$ the Bergman $\K$-algebra corresponding to $(M, d)$.
By Theorem \ref{Bergman}, we have $(\mathcal{V}(B), [B])\cong (M, d)$, and  $B$ has  the weak universal property. 
Consider the commutative monoid $T = (\{0, 1\}, \vee)$. It is obvious that $T$ is finitely generated and conical, and $1$ is the order-unit of $T$. Consider the map $\psi: M\longrightarrow T$ defined by $\psi(0) =0$ and $\psi(x) =1$ for all $x\in M\setminus \{0\}$. Let $x$ and $y$ be two elements of $M$. If $x + y = 0$, then since $M$ is conical, $x =0 =y$, and so $\psi(x + y) = 0 = \psi(x) \vee \psi(y)$. If $x + y \neq 0$, then we have either $x\neq 0$ or $y\neq 0$, and so $\psi(x)= 1$ or $\psi(y)=1$. This shows that $\psi(x) \vee \psi(y) = 1 = \psi(x + y)$. Therefore, $\psi$ is a monoid homomorphism with $\psi(d) =1$, that means, $\psi$ is a pointed homomorphism from $(M, d)$ to $(T, 1)$. Moreover, by \cite[Theorem 3.2.5]{lpabook}, we have $(L_\K(1, 2), [L_\K(1, 2)]) \cong (T, 1)$. By the weak universal property of $B$, there exists a $\K$-algebra homomorphism $\phi: B\longrightarrow L_\K(1, 2)$ such that $\mathcal{V}(\phi) = \psi$.

Specialising  the conical monoid $M$ to the graph monoid $M_E$, by \cite[Theorem~3.5]{ara2006}, we have  $$\left (\mathcal{V}(L_\K(E), [L_\K(E)]\right )\cong (M_{E}, d_{E}),$$ where $ d_{E} = \sum_{v\in E^0}v$, and $L_\K(E)$ is a Bergman algebra for $M_E$ and thus has 
the weak universal property. Thus, by the above observation, we have a unital homomorphism $\phi: L_\K(E) \longrightarrow L_\K(1, 2)$.

We claim that $\phi$ is injective. Indeed, let $a$ be a nonzero idempotent element of $L_\K(E)$. Since the right ideal $aL_\K(E)$ is a nonzero finitely generated projective right $L_\K(E)$-module, we have $$\mathcal{V}(\phi)([aL_\K(E)]) = \psi([aL_\K(E)])= 1,$$ and so 
\[\mathcal{V}(\phi)([aL_\K(E)]) = [aL_\K(E)\otimes_{L_\K(E)} L_\K(1, 2)]\neq 0.\] 

On the other hand, since the exact sequence $$0 \to  a L_\K(E)
\xrightarrow{\iota} L_\K(E) \xrightarrow{\pi} L_\K(E)/ a L_\K(E)\to 0$$ of right $L_\K(E)$-modules, where $\iota$ is the canonical injection and $\pi$ is the canonical surjection, is split, we have $[\phi(a)L_\K(1, 2)]=[aL_\K(E)\otimes_{L_\K(E)} L_\K(1, 2)]\neq 0.$ This implies that $\phi(a)\neq 0$. In particular, $\phi(v)\neq 0$ for all $v\in E^0$. 
By our hypothesis that $E$ satisfies Condition
(L), we may apply the Cuntz-Krieger Uniqueness Theorem~\cite[Theorem 2.2.16]{lpabook} to conclude that  $L_\K(E)$ embeds into $L_\K(1, 2)$.

Since $R$ is flat over $\K$, we have an injection $L_\K(E) \otimes_\K R \hookrightarrow L_\K(1,2) \otimes_\K R$, hence,   $L_R(E)$ embeds into $L_R(1, 2)$ as well, thus finishing the proof.
\end{proof}

We should mention that Corti\~ nas and Montero \cite[Corollary 3.2]{Cortinas2020} showed that any simple Leavitt path $\K$-algebra of a finite graph embeds into the Leavitt algebra $L_{\K}(1, 2)$ using $K$-theory.

Consequently, we will obtain that the Cohn path algebra of a finite graph can be embedded into the Leavitt algebra $L_{\K}(1, 2)$.  Before doing so, we represent a specific case of a more general result described in \cite[Section 1.5]{lpabook}. Namely,
let $E = (E^0, E^1, s, r)$ be an arbitrary graph and $Y$ the set of regular vertices of $E$.
Let $Y'=\{v'\ |\ v\in Y\}$ be a disjoint copy of $Y$. For $v\in Y$ and for each edge $e$ in $E^1$ such
that $r_E(e)=v$, we consider a new symbol $e'$. We define the graph $F(E)$ as follows:
$$F(E)^0 := E^0\sqcup Y' \text{ and } F(E)^1 :=E^1\sqcup \{e'\ |\ r_E(e)\in
Y\},$$ and for each $e\in E^1$, $s_{F(E)}(e)=s_E(e),\ s_{F(E)}(e')=s_E(e),\
r_{F(E)}(e)=r_E(e)$, and $r_{F(E)}(e')=r_E(e)'$. For instance, if

\begin{tikzpicture}[>=stealth, scale=1.15, every node/.style={scale=1.15}]

\node[circle, fill, inner sep=1pt] (v) at (0,0) {};
\node at (-0.4,0) {$v$};

\draw[->, thick, out=140, in=60, looseness=50]
  (v) to (v) node[above, yshift=23pt] {$e$};

\draw[->, thick, out=-60, in=-140, looseness=50]
  (v) to (v) node[ below, yshift=-22pt] {$f$};

\node at (-2,0) {$E$};

\node[circle, fill, inner sep=1pt] (v1) at (4.6,0) {};
\node[circle, fill, inner sep=1pt] (v2) at (7.2,0) {};

\node at (4.3,0) {$v$};
\node at (7.7,0) {$v'$};

\draw[->, thick, out=140, in=60, looseness=50]
  (v1) to (v1) node[above, yshift=23pt] {$e$};

\draw[->, thick, out=-60, in=-140, looseness=50]
  (v1) to (v1) node[below, yshift=-22pt] {$f$};

\draw[->, thick, bend left=22] (v1) to node[above] {$e'$} (v2);
\draw[->, thick, bend right=22] (v1) to node[below] {$f'$} (v2);

\node at (3.2,0) {$F(E)$};

\end{tikzpicture}

In \cite[Theorem 1.5.17]{lpabook}, Abrams, Ara and Siles Molina showed that for any field $\K$
and any graph $E$, there is an isomorphism of $\K$-algebras $C_\K(E)\cong L_\K(F(E))$. Using this surprising result and Theorem \ref{sLPA-embed} we get the following.

\begin{cor}\label{CPA-embed}
Let $\K$ be a field, $R$ a unital $\K$-algebra and $E$ a finite graph. Then, the Cohn path algebra $C_{R}(E)$ can be embedded into the Leavitt algebra $L_{R}(1, 2)$.
\end{cor}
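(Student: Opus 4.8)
The plan is to reduce the statement to Theorem~\ref{sLPA-embed} through the identification of Cohn path algebras with Leavitt path algebras of the ``doubled'' graph $F(E)$. First I would invoke \cite[Theorem~1.5.17]{lpabook}: for the field $\K$ and the graph $E$ there is a $\K$-algebra isomorphism $C_\K(E)\cong L_\K(F(E))$. Since $E$ is finite and the construction $F(E)$ adds only finitely many vertices (one $v'$ per regular vertex) and finitely many edges (at most one $e'$ per edge of $E$), the graph $F(E)$ is again finite.

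The heart of the argument is to check that $F(E)$ satisfies condition $(L)$. Let $c = e_1\cdots e_n$ be a cycle in $F(E)$. Every primed edge $e'$ has range a vertex $r_E(e)' \in Y'$, and each vertex of $Y'$ is a sink in $F(E)$; hence no $e_i$ can be primed, so $c$ lies entirely in $E^1$, i.e.\ it is a cycle of $E$. Now take any vertex $w = s(e_i) = r_E(e_{i-1})$ on $c$ (indices read mod $n$, with $e_0 := e_n$). As $e_i$ is an edge of $E$ with source $w$ and $E$ is finite, $w$ is a regular vertex of $E$, so $w\in Y$; therefore, by the construction of $F(E)$, the edge $e_{i-1}'$ exists, with $s_{F(E)}(e_{i-1}') = s_E(e_{i-1})$ and $r_{F(E)}(e_{i-1}') = r_E(e_{i-1})' = w' \neq w$. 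Thus $e_{i-1}'$ is an exit for $c$ at the vertex $s(e_{i-1})$, and $F(E)$ has condition $(L)$.

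Applying Theorem~\ref{sLPA-embed} to the finite graph $F(E)$ over the field $\K$ then yields an embedding $C_\K(E)\cong L_\K(F(E)) \hookrightarrow L_\K(1,2)$. To get the statement over an arbitrary unital $\K$-algebra $R$, I would pass to the base change: $C_R(E)\cong C_\K(E)\otimes_\K R$ and $L_R(1,2)\cong L_\K(1,2)\otimes_\K R$, since both algebras are presented by the same generators and relations over the respective coefficient ring, and tensoring the above embedding with $R$ over $\K$ remains injective because $R$ is free, hence flat, over $\K$ --- exactly as in the final step of the proof of Theorem~\ref{sLPA-embed}. This produces $C_R(E)\hookrightarrow L_R(1,2)$.

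The only genuine subtlety --- and the step I would be most careful about --- is the verification that every vertex occurring on a cycle of $F(E)$ is a regular vertex of $E$, so that the corresponding primed exit edge is indeed present in $F(E)$; everything else is a routine combination of the cited isomorphism, Theorem~\ref{sLPA-embed}, and flat base change. It is worth a moment to check the degenerate cases (loops in $E$, and vertices that are sinks in $E$ but receive edges) to be sure no cycle of $F(E)$ slips past the argument.
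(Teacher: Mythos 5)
Your proposal is correct and follows essentially the same route as the paper: identify $C_R(E)\cong L_R(F(E))$ via \cite[Theorem~1.5.17]{lpabook}, observe that $F(E)$ is a finite graph satisfying condition $(L)$, and apply Theorem~\ref{sLPA-embed}. The only difference is that you spell out the verification of condition $(L)$ (which the paper dismisses as ``easy to see'') and handle the base change to $R$ explicitly rather than quoting the theorem directly over $R$; both of these details check out.
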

\begin{proof}
We have that $C_R(E)\cong L_R(F(E))$, where $F(E)$ is the finite graph constructed from $E$ cited above. Using that discription, it is easy to see that $F(E)$ is a finite graph with condition $(L)$. By Theorem \ref{sLPA-embed}, $C_R(E)\cong L_R(F(E))$ embeds into the Leavitt algebra $L_{R}(1, 2)$, thus finishing the proof.
\end{proof}

\begin{rem}
Theorem~\ref{sLPA-embed} extends to Leavitt path algebras associated to row-finite graphs, by noting that such an algebra can be written as a 
direct limit of Leavitt path algebras of finite graphs~\cite{ara2006} and that Bergman's construction carries over to direct limits (see~\cite[Theorem~6.4]{Berg74}). 
\end{rem}

\begin{quess}
Among the generalizations of Leavitt path algebras are the algebras arising from higher-rank graphs, referred to as Kumjian-Pask algebras~\cite{Pino}. It is not yet known whether Kumjian-Pask algebras can be embedded into $L_\K(1,2)$. It is also not clear which class of Kumjian-Pask algebras can be realized as Bergman algebras.  If affirmative, this in particular answers the question whether $L_\K(1,2) \otimes_{\K} L_\K(1,2)$ embeds into $L_\K(1,2)$ as $L_\K(1,2) \otimes_{\K} L_\K(1,2)$ can be realized as the Kumjian-Pask algebra of the $2$-graph
\[
\begin{tikzpicture}[scale=1]
\node[circle,draw,fill=black,inner sep=0.5pt] (p11) at (0, 0) {$.$} 

edge[-latex, red,thick,loop, dashed, in=10, out=90, min distance=70] (p11)
edge[-latex, red,thick, loop, dashed, out=90, in=170, min distance=70] (p11)

edge[-latex, blue,thick,loop,  in=-10, out=-90, min distance=70] (p11)

edge[-latex, blue,thick, loop, out=-90, in=-170, min distance=70] (p11);

\node at (0.2, -0.3) {$v$};
\node at (-1.5,1) {$e_1$};
\node at (1.5,1) {$e_2$};
\node at (1.5,-1) {$f_2$};
\node at (-1.5,-1) {$f_1$};
\end{tikzpicture}
\]
in which the blue and red edges commute with one another (see ~\cite{arawillie, sorensen}). 
\end{quess}

\section{An affirmative answer to  Brownlowe and S\o rensen's question}

In this section, we show that the first Weyl algebra $\mathbb A_\K$ does not embed into any unital Steinberg algebra (Theorem \ref{main}). Consequently, we obtain that $\mathbb A_\K$ does not embed into $L_\K(1, 2)$, which gives an affirmative answer to a question of  Brownlowe and S\o rensen Question~~\cite[p.~349]{brownsor}.  We further show that $\mathbb A_\K$ does not embed into $L_\K(m,n)$ for any pair $m < n$ (Theorem \ref{globalmain}). 
\medskip
 
We will use the following result of Wintner-Wielandt \cite{Wielandt} (see also Halmos~\cite[Chapter~24]{halmos}) showing that in a Banach algebra the additive commutator $PQ-QP$ cannot be a scalar, which plays an important role in our analysis. 

\begin{prop}\label{wielandt}
Let $A$ be a Banach algebra with identity over real or complex numbers. If $PQ - QP= \alpha 1_A$, where $P,Q\in A$ and $\alpha $ a scalar, then $\alpha  = 0$. 
\end{prop}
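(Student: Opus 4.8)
The statement to prove is Proposition \ref{wielandt}: in a real or complex unital Banach algebra $A$, if $PQ-QP=\alpha 1_A$ then $\alpha=0$. The plan is the classical norm-growth argument. First I would establish, by induction on $n\ge 1$, the identity
\begin{equation*}
P^nQ - QP^n = n\alpha P^{n-1}.
\end{equation*}
The base case $n=1$ is the hypothesis, and the inductive step follows by writing $P^{n+1}Q-QP^{n+1} = P(P^nQ-QP^n) + (PQ-QP)P^n = n\alpha P^n + \alpha P^n = (n+1)\alpha P^n$, using the inductive hypothesis and the hypothesis.

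Next I would argue that $P^n\ne 0$ for every $n$, provided $\alpha\ne 0$: if $P^{n-1}=0$ for some minimal $n\ge 2$, then $n\alpha P^{n-1}=0$ forces (via the identity) $P^nQ-QP^n=0$, which is automatic, so that alone is not a contradiction; instead I would note that from $P^nQ-QP^n = n\alpha P^{n-1}$, if $P^{n-1}=0$ then also $P^n = P\cdot P^{n-1}=0$, so $P$ is nilpotent, say $P^k=0$ with $k$ minimal; but then $0 = P^kQ-QP^k = k\alpha P^{k-1}$ gives $P^{k-1}=0$ (since $k\alpha\ne 0$ and the algebra — wait, it may have zero divisors), so more carefully: $k\alpha P^{k-1}=0$ with $k\alpha$ a nonzero scalar forces $P^{k-1}=0$, contradicting minimality of $k$. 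Hence if $\alpha\ne 0$ then no power of $P$ vanishes.

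Then comes the norm estimate, which is the heart of the matter. Taking norms in $P^nQ-QP^n = n\alpha P^{n-1}$ and using submultiplicativity and the triangle inequality,
\begin{equation*}
|n\alpha|\,\|P^{n-1}\| = \|P^nQ - QP^n\| \le 2\|P^n\|\,\|Q\| \le 2\|P\|\,\|P^{n-1}\|\,\|Q\|.
\end{equation*}
Since $P^{n-1}\ne 0$ we may divide by $\|P^{n-1}\|$ to get $|n\alpha| \le 2\|P\|\,\|Q\|$ for all $n\ge 1$. Letting $n\to\infty$, the left side is unbounded unless $\alpha=0$, a contradiction. Hence $\alpha=0$.

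The only delicate point — and the one I would be most careful about — is handling the possibility of zero divisors and nilpotents, i.e.\ justifying that $\|P^{n-1}\|\ne 0$ for all $n$ when $\alpha\ne 0$; this is exactly what the nilpotency argument in the second paragraph secures, since the scalar $n\alpha$ multiplying $P^{n-1}$ can be cancelled as a unit in $\K$ even though $A$ itself need not be a domain. Everything else is routine Banach-algebra manipulation. (In the paper this proposition is quoted from Wintner–Wielandt, so the above is really just the standard proof recorded for completeness.)
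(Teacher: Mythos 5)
Your proof is correct and follows essentially the same route as the paper's: the same induction giving $P^nQ-QP^n=n\alpha P^{n-1}$, the same disposal of the nilpotent case (the paper phrases it as a direct two-case split, you as a contrapositive, but the content is identical), and the same norm estimate yielding $n|\alpha|\le 2\|P\|\cdot\|Q\|$ for all $n$. Nothing further is needed.
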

\begin{proof}
For the reader's convenience, we briefly sketch its proof here. 

Assume that $PQ - QP= \alpha 1_A$. Then, by induction on $n$, we immediately obtain that $$P^n Q - QP^n = n P^{n-1}\alpha$$ for all positive integer $n$. If $P^{n-1}\neq 0$ but $P^n=0$ for some $n \ge 1$, then $n P^{n-1}\alpha =0$, and so $\alpha =0$. If $P^n \neq 0$ for all $n\ge 1$, then by the triangle inequality we have $$n||P^{n-1}||\cdot |\alpha| = ||n P^{n-1}\alpha|| = ||P^n Q - QP^n||\le 2 ||P||\cdot ||Q||\cdot ||P^{n-1}||$$ for all $n$. This implies that $n|\alpha| \le 2||P||\cdot ||Q||$ for all $n$, and so $\alpha =0$, thus finishing the proof.
\end{proof}

We next recall the concepts of ample groupoids and Steinberg algebras. A \textit{groupoid} is a small category in which every morphism is invertible. It can also be viewed as a generalization of a group which has a partial binary operation. 
Let $\mathcal{G}$ be a groupoid. If $x \in \mathcal{G}$, $s(x) = x^{-1}x$ is the source of $x$ and $r(x) = xx^{-1}$ is its range. The pair $(x,y)$ is composable if and only if $r(y) = s(x)$. The set $\mathcal{G}^{(0)}:= s(\mathcal{G}) = r(\mathcal{G})$ is called the \textit{unit space} of $\mathcal{G}$. Elements of $\mathcal{G}^{(0)}$ are units in the sense that $xs(x) = x$ and
$r(x)x = x$ for all $x\in \mathcal{G}$. For $U, V \subseteq \mathcal{G}$, we define
\begin{center}
	$UV = \{\alpha\beta \mid \alpha\in U, \beta\in V \text{\, and \,} r(\beta) = s(\alpha)\}$ and $U^{-1} = \{\alpha^{-1}\mid \alpha\in U\}$.
\end{center}

A \textit{topological groupoid} is a groupoid endowed with a topology under which the inverse map is continuous, and such that the composition is continuous with respect to the relative topology on $\mathcal{G}^{(2)} := \{(\beta, \gamma)\in \mathcal{G}^2\mid s(\beta) = r(\gamma)\}$ inherited from $\mathcal{G}^2$. An \textit{\'{e}tale groupoid} is a topological groupoid $\mathcal{G},$ whose unit space  $\mathcal{G}^{(0)}$ is locally compact Hausdorff, and such that 
the domain map $s$ is a local homeomorphism. In this case, the range map $r$ and the multiplication map are local homeomorphisms and  $\mathcal{G}^{(0)}$ is open in $\mathcal{G}$ \cite{r:egatq}. 

An \textit{open bisection} of $\mathcal{G}$ is an open subset $U\subseteq \mathcal{G}$ such that $s|_U$ and $r|_U$ are homeomorphisms onto an open subset of $\mathcal{G}^{(0)}$.  Similar to \cite[Proposition 2.2.4]{p:gisatoa} we have that $UV$ and $U^{-1}$ are compact open bisections for all compact open bisections $U$ and $V$ of an  \'{e}tale groupoid  $\mathcal{G}$.
An \'{e}tale groupoid $\mathcal{G}$ is called \textit{ample} if  $\mathcal{G}$ has a base of compact open bisections for its topology.

Steinberg algebras were introduced in \cite{stein:agatdisa} in the context of discrete inverse semigroup algebras. Let $\mathcal{G}$ be an ample groupoid, and $R$ a unital commutative ring with the discrete topology. We denote by $R^{\mathcal{G}}$ the set of all continuous functions from $\mathcal{G}$ to $R$. Canonically,  $R^{\mathcal{G}}$ has the structure of a $R$-module with
operations defined pointwise.

\begin{defn}[{The Steinberg algebra}] 
	Let $\mathcal{G}$ be an ample groupoid, and  $R$ a unital commutative ring.  Let $A_R(\mathcal{G})$ be the $R$-submodule of $R^{\mathcal{G}}$ generated by the set
	\begin{center}
		$\{1_U\mid U$ is a compact open bisection of $\mathcal{G}\}$,
	\end{center}
	where $1_U: \mathcal{G}\longrightarrow R$ denotes the characteristic function on $U$. The \textit{multiplication} of $f, g\in A_R(\mathcal{G})$ is given by the convolution
	\[(f\ast g)(\gamma)= \sum_{\gamma = \alpha\beta}f(\alpha)g(\beta)\] for all $\gamma\in \mathcal{G}$. The $R$-submodule $A_R(\mathcal{G})$, with convolution, is called the \textit{Steinberg algebra} of $\mathcal{G}$ over $R$.
\end{defn}

It is useful to note that \[1_U\ast 1_V = 1_{UV}\] for compact open bisections $U$ and $V$ (see \cite[Proposition 4.5]{stein:agatdisa}), and the Steinberg algebra $A_R(\mathcal{G})$ is unital if and only if $\mathcal{G}^{(0)}$ is compact (see \cite[Proposition 4.11]{stein:agatdisa}).

The proposition provides necessary conditions for Steinberg algebras to be domains, which is very useful in our below arguments.

\begin{prop}\label{domain}
Let $R$ be a unital commutative ring and $\mG$ an ample groupoid. If $A_R(\mG)$ has no zero divisors, then $R$ is a domain and $\mG$ is a torsion-free group with the discrete topology. In particular, $A_R(\mG)$ is isomorphic to the group ring $R\mG$.
\end{prop}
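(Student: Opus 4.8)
The plan is to exploit the fact that $A_R(\mG)$ is spanned by characteristic functions $1_U$ of compact open bisections, and that the product of two such functions is again such a function ($1_U \ast 1_V = 1_{UV}$), together with the hypothesis that no nonzero element kills another nonzero element. First I would observe that if $v \in \mG^{(0)}$ is an isolated point of the unit space (which we must eventually show is the whole story), then $1_{\{v\}}$ is a nonzero idempotent; if there were two distinct isolated units $v \neq w$, then $1_{\{v\}} \ast 1_{\{w\}} = 1_{\{v\}\{w\}} = 1_\emptyset = 0$ while both factors are nonzero, contradicting the domain hypothesis. So the real work is (i) showing $\mG^{(0)}$ is discrete — equivalently a single point will not do immediately; rather, $\mG^{(0)}$ must be \emph{finite} and then the above forces it to be a single point — and (ii) upgrading "$\mG$ is a groupoid with one unit" to "$\mG$ is a torsion-free \emph{group} with the discrete topology," and finally identifying $A_R(\mG)$ with the group ring $R\mG$.

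For step (i): since $A_R(\mG)$ has no zero divisors it is in particular unital (a ring with $1$ is implicit in the statement "$A_R(\mG)$ has no zero divisors" typically including the nonzero-ring convention, and more importantly we want $\mG^{(0)}$ compact). Actually the cleaner route: take any compact open subset $K \subseteq \mG^{(0)}$ that is a proper nonempty clopen subset with clopen complement $K'$ inside some compact open piece; then $1_K \ast 1_{K'} = 1_{K \cap K'} = 0$ with both factors nonzero — contradiction. Hence $\mG^{(0)}$ has no proper nonempty clopen subsets relative to any compact open neighborhood, which combined with local compactness and the Hausdorff property of $\mG^{(0)}$ and the ample (totally disconnected) structure forces $\mG^{(0)}$ to be a single point. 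Here is the main obstacle: making the totally-disconnected-plus-no-nontrivial-clopen argument airtight. Since $\mG$ is ample, $\mG^{(0)}$ has a basis of compact open sets; if $\mG^{(0)}$ had two distinct points they could be separated by a compact open set $K$ with $K \neq \mG^{(0)}$, and then $K$ and any compact open $K'$ disjoint from $K$ (e.g. a compact open neighborhood of the other point shrunk to avoid $K$, using Hausdorffness) give the contradiction. So $\mG^{(0)} = \{v\}$ is a single point, automatically compact, so $A_R(\mG)$ is unital with identity $1_{\{v\}}$.

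For step (ii): with $\mG^{(0)} = \{v\}$, every element of $\mG$ has source and range equal to $v$, so every pair is composable and $\mG$ is a group. It is discrete because it is ample with one-point, hence open, unit space and $s$ a local homeomorphism: each $g \in \mG$ has a compact open bisection neighborhood $U$ with $s(U)$ open in $\mG^{(0)} = \{v\}$, forcing $s(U) = \{v\}$ and $U = \{g\}$; so singletons are open. Now $A_R(\mG)$ is spanned by the $1_{\{g\}}$, $g \in \mG$, with $1_{\{g\}} \ast 1_{\{h\}} = 1_{\{gh\}}$, so $g \mapsto 1_{\{g\}}$ extends to a surjective $R$-algebra homomorphism $R\mG \to A_R(\mG)$; it is injective because distinct group elements give characteristic functions of disjoint singletons, which are $R$-linearly independent in $R^\mG$. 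Thus $A_R(\mG) \cong R\mG$. Finally, $R$ is a domain since $R \hookrightarrow R\mG$ as the coefficients of $1_{\{v\}}$, and a subring of a domain is a domain; and $\mG$ is torsion-free because if $g^n = v$ with $g \neq v$ and $n \geq 2$ minimal, then in $R\mG$ the element $1_{\{g\}} - 1_{\{v\}}$ is a nonzero zero divisor, being annihilated by $1_{\{v\}} + 1_{\{g\}} + \cdots + 1_{\{g^{n-1}\}} \neq 0$, contradicting the hypothesis. I expect the delicate point to be the topological argument that a one-point-or-contradiction dichotomy genuinely holds for $\mG^{(0)}$; everything after "$\mG^{(0)}$ is a single point" is essentially bookkeeping with group rings.
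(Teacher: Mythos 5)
Your proposal is correct and follows essentially the same route as the paper: separate two hypothetical distinct units by disjoint compact open sets to produce orthogonal nonzero idempotents $1_A \ast 1_B = 1_{A\cap B} = 0$, conclude $\mG^{(0)}$ is a singleton so $\mG$ is a discrete group with $A_R(\mG)\cong R\mG$, and then read off that $R$ is a domain and that a torsion element would give the zero divisor $(g-1)(1+g+\cdots+g^{n-1})=0$. The only difference is that you supply more detail (discreteness of $\mG$ via bisections, the explicit isomorphism with $R\mG$) where the paper simply asserts these facts.
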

\begin{proof}
Suppose  $A_R(\mG)$	has no zero divisors, and assume that $\mathcal{G}^{(0)}$ contains two distinct elements $u$ and $v$. Since  $\mathcal{G}^{(0)}$ is Hausdorff, there exist two open subsets $U$ and $V$
of  $\mathcal{G}^{(0)}$ such that $u\in U$, $v\in V$ and $U \cap V = \emptyset$. Since $\mathcal{G}$ is an ample groupoid, $\mathcal{G}$ has a base of compact open bisections for its topology, and so there exist two compact open bisections $A$ and $B$ such that $u\in A\subseteq U$ and $v\in B\subseteq V$. We then have $1_A  1_B = 1_{A \cap B} = 1_{\emptyset} =0$, and hence, $A_R(\mathcal{G})$ has zero divisors,  a contradiction. Therefore,  $\mathcal{G}^{(0)}$ is a singleton set, and so  $\mathcal{G}$ is a group with the discrete topology. This implies that $A_R(\mG)$ is isomorphic to the group ring $R\mG$, and so  $R\mG$ has no zero divisors. Since $R$ is a subring of $R\mG$, $R$ is also a domain. If $\mG$ has an element $g$ of finite order $n\ge 2$, then $$(g-1)(1 + g + \cdots + g^{n-1})=0 \text{ in } R\mG$$ shows that $R\mG$ has zero divisors, a contradiction. Therefore, $\mG$ is a torsion-free group. 
\end{proof}

If $\mG$ is the trivial group, then $A_R(\mG) \cong R$ as $R$-algebras. In light of this note, we have the following definition.

\begin{defn}
Let $R$ be a unital commutative ring and $\mG$ an ample groupoid. We say that the Steinberg algebra $A_R(\mG)$ is {\it nontrivial}	if $A_R(\mG)\ncong R$.
\end{defn}

Steinberg algebras are discrete analogues of groupoid $C^*$-algebras (\cite{p:gisatoa} and \cite{renault}). We recall the construction of the full $C^*$-algebra of an ample groupoid as described in \cite{ClarkZim24}. Let $\mathcal{G}$ be an ample groupoid. Then $A_{\mathbb{C}}(\mathcal{G})$ is a $*$-algebra with involution $f^*(\gamma) = \overline{f(\gamma^{-1})}$. For each $x\in \mathcal{G}^{0}$, we consider a $*$-representation $\pi_x: A_{\mathbb{C}}(\mathcal{G}) \longrightarrow \mathcal{B}(\ell^2(\mathcal{G}_x))$ given on the
orthonormal basis $\delta_{\gamma}$, $\gamma \in \mathcal{G}_x:= \{\alpha \in \mathcal{G}\mid s(\alpha) = x\}$, by $$\pi_x(f)\delta_{\gamma} = \sum_{\alpha \in s^{-1}(r(\gamma))}f(\alpha)\delta_{\alpha\gamma}.$$
We note that $||\pi_x(1_U)||\le 1$ for all compact open bisection $U \subseteq \mathcal{G}$ (see, e.g. \cite[page 10]{steinwyk}). It follows that if $f = \sum^n_{i=1}k_i1_{U_i}\in A_{\mathbb{C}}(\mathcal{G})$, where each $U_i$ is a compact open bisection of $\mathcal{G}$, then $$||\pi_x(f)|| \le \sum^n_{i=1}|k_i|,$$ and so $||\pi_x(f)|| \le |f|$, where $| \cdot |$ is the norm defined in~\cite{steinwyk} by
$$|f| = \inf\Big\{\sum^n_{i=1}|c_i| \mid f =\sum^n_{i=1}c_i1_{U_i}, U_i \in \Gamma_c(\mathcal{G})\Big\},$$ 
where $\Gamma_c(\mathcal{G})$ is the set of all  compact
open bisections of $\mathcal{G}$. Let $\mathcal{H} = \oplus_{x\in \mathcal{G}^{(0)}} \ell^2(\mathcal{G}_x)$. Then there  is an injective $*$-homomorphism $\pi: A_{\mathbb{C}}(\mathcal{G}) \longrightarrow \mathcal{B}(\mathcal{H})$ such that 
\begin{center}
$\pi(f)((g_x)_{x\in \mathcal{G}^{(0)}}) = (\pi_x(f)g_x)_{x\in \mathcal{G}^{(0)}}$	and $||\pi(f)|| = \sup_{x\in \mathcal{G}^{(0)}}||\pi_x(f)||$ 
\end{center}
for all $(g_x)_{x\in \mathcal{G}^{(0)}}\in \mathcal{H}$ and $f\in A_{\mathbb{C}}(\mathcal{G})$. We call $\pi$ the {\it left regular representation} of $A_{\mathbb{C}}(\mathcal{G})$. By \cite[Theorem 4.7]{ClarkZim24}, for $f\in A_{\mathbb{C}}(\mathcal{G})$, the formula $$||f|| = \sup \{||\pi(f)|| \mid \pi \text{ is a representation of } A_{\mathbb{C}}(\mathcal{G})\}$$ is a $C^*$-norm on 
$A_{\mathbb{C}}(\mathcal{G})$.
The {\it groupoid $C^*$-algebra $C^*(\mathcal{G})$} of $\mathcal{G}$ is
 then the completion of $A_{\mathbb{C}}(\mathcal{G})$ with respect to $||\cdot||$ (see~\cite{ClarkZim24} for details).

In order to prove the main result of this section (Theorem~\ref{globalmain}), we divide it into two classes of $L_\K(1,n)$ and  $L_\K(m,n)$, $m>1$.  Recall that Leavitt path algebras, in particular the Leavitt algebras $L_\K(1,n)$, can be modelled by Steinberg algebras~\cite{lisarooz}. 

\begin{thm}\label{main}
Let $\K$ be a field of characteristic zero. Then, the following statements hold:

$(1)$ There is no unital homomorphism from the first Weyl algebra $\mathbb A_\K$ to any unital Steinberg algebra;

$(2)$ Conversely, there is no unital injective homomorphism from a nontrivial unital Steinberg algebra to the first  Weyl algebra $\mathbb A_\K$. 
\end{thm}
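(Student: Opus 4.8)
The overall strategy is to transport the problem from Steinberg algebras to Banach (indeed $C^*$-) algebras and then apply the Wintner--Wielandt Proposition \ref{wielandt}. For part $(1)$, suppose toward a contradiction that there is a unital homomorphism $\rho\colon \mathbb A_\K \to A_R(\mG)$ for some ample groupoid $\mG$ with $\mG^{(0)}$ compact. Since $\mathbb A_\K$ contains the prime field, and the relation $xy-yx=1$ only involves the integers, I would first reduce to the case $R = \mathbb{C}$ (or a suitable subfield of $\mathbb{C}$): the subalgebra of $A_R(\mG)$ generated by $\rho(x)$ and $\rho(y)$ lives inside $A_{R_0}(\mG)$ for a finitely generated subring $R_0$ of $R$, and one wants to embed $R_0$ into $\mathbb{C}$. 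The cleanest route is to base-change: it suffices to realize the Heisenberg relation in $A_{\mathbb C}(\mG')$ for \emph{some} ample groupoid $\mG'$ with compact unit space. Then, using the construction recalled just before the theorem, $A_{\mathbb C}(\mG')$ embeds (via the left regular representation $\pi$, or via the universal $C^*$-norm $\|\cdot\|$) into the groupoid $C^*$-algebra $C^*(\mG')$, which is a unital Banach algebra over $\mathbb C$ because $\mG'^{(0)}$ is compact. Inside this $C^*$-algebra we would have $PQ - QP = 1$ with $P = \pi(\rho(x))$, $Q=\pi(\rho(y))$, contradicting Proposition \ref{wielandt}. The main obstacle here is the coefficient reduction: one must be careful that a finitely generated subring of an arbitrary characteristic-zero field does embed into $\mathbb C$ (it does, since such a ring is a finitely generated $\mathbb Z$-algebra that is a domain of characteristic zero, hence embeds into its fraction field, which has a subfield of finite transcendence degree over $\mathbb Q$ and thus into $\mathbb C$), and that the groupoid $C^*$-algebra machinery of \cite{ClarkZim24} applies verbatim to $A_{\mathbb C}(\mG)$ with compact unit space so that it sits faithfully inside a unital $C^*$-algebra.

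For part $(2)$, I would argue contrapositively using Proposition \ref{domain}. Suppose $A_R(\mG)$ is a nontrivial unital Steinberg algebra that embeds into $\mathbb A_\K$. The first Weyl algebra is a domain, so every subalgebra is a domain; hence $A_R(\mG)$ has no zero divisors. By Proposition \ref{domain}, $R$ is a domain, $\mG$ is a torsion-free group $G$ with the discrete topology, and $A_R(\mG)\cong R[G]$. Nontriviality means $A_R(\mG)\ncong R$, i.e. either $G$ is nontrivial or $R \ne R$ — so $G$ is a nontrivial torsion-free group. Now I would extract a contradiction from the structure of $\mathbb A_\K$: the group of units of $\mathbb A_\K$ is just $\K^\times$ (the Weyl algebra has only scalar units, as is classical), while a group ring $R[G]$ with $G$ nontrivial and torsion-free contains the unit $g$ of infinite order, which is not a scalar multiple of $1$ (since $\{1,g\}$ is $R$-linearly independent in $R[G]$). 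This would force $g \in R^\times \subseteq \K^\times$ inside $\mathbb A_\K$, contradicting that $g\ne \lambda 1$; alternatively one notes $\K[G]$ would be a commutative-if-$G$-abelian or more generally non-Noetherian-or-zero-divisor-free subalgebra. The cleanest contradiction: a nontrivial torsion-free group ring $R[G]$ contains $\mathbb Z[t,t^{-1}]$ (take the cyclic subgroup generated by any $g\ne 1$), hence $R[G]$ has a nontrivial unit $t$ with $t - 1$ a nonzero nonunit, whereas in $\mathbb A_\K$ every nonzero nonunit... — actually the sharpest statement is simply that $\mathbb A_\K^\times = \K^\times$, so no element of $R[G]\setminus R\cdot 1$ can be a unit, yet $g$ is; this is the desired contradiction. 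The main point to pin down carefully is the fact $\mathbb A_\K^\times = \K^\times$ in characteristic zero, which follows from the existence of the degree/order filtration on $\mathbb A_\K$ whose associated graded ring is the commutative polynomial ring $\K[x,y]$, a domain, forcing units of $\mathbb A_\K$ to have degree $0$.

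In summary, part $(1)$ is the substantive half: descend the coefficients to $\mathbb C$, pass through the groupoid $C^*$-algebra to land in a unital Banach algebra, and invoke Wintner--Wielandt; part $(2)$ is a short structural argument via Proposition \ref{domain} and the triviality of the unit group of the Weyl algebra. I expect the coefficient-field reduction in $(1)$ — and verifying that the $C^*$-completion argument of \cite{ClarkZim24} genuinely produces a \emph{unital} Banach algebra containing $A_{\mathbb C}(\mG)$ when $\mG^{(0)}$ is compact — to be the only places requiring real care.
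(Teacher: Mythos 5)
Your proposal is correct and follows essentially the same route as the paper's proof: for part $(1)$ you descend the coefficients to a subfield of $\mathbb{C}$ (the paper does this via a transcendence-basis argument on the countable subfield generated by the finitely many coefficients of $x$ and $y$, while you pass through a finitely generated subring and its fraction field — both work), then embed $A_{\mathbb{C}}(\mG)$ into the unital groupoid $C^*$-algebra $C^*(\mG)$ and invoke Proposition~\ref{wielandt}. For part $(2)$ you argue exactly as the paper does, via Proposition~\ref{domain} together with the fact that the unit group of $\mathbb A_\K$ is $\K^{\times}$.
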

\begin{proof}
Assume that there exists a unital ring homomorphism from the first Weyl algebra $\mathbb A_\K$ to the unital Steinberg algebra	$A_\K(\mG)$ of an ample groupoid $\mG$. Then, there exist elements $x, y\in A_\K(\mG)$ such that $xy-yx=1$. Write

\begin{center}
$x = \sum^m_{i=1}a_i1_{U_i}$ and  $y = \sum^n_{j=1}b_j 1_{V_j}$
\end{center}
where $a_i, b_j \in \K\setminus \{0\}$ and $U_i, V_j$ are compact open bisections of $\mG$ for all $1\le i\le m$ and $1\le j\le n$. Let $\K'$ be the subfield of $\K$ generated by all the elements $a_i, b_j$. We then have that $\K'$ is a countable field of characteristic zero, and so the transcendence degree of $\K'$ over $\mathbb{Q}$ is at most $\aleph_0$. Let $S \subset \K'$ and $T \subset \mathbb{C}$ be transcendence bases over $\mathbb{Q}$. Then, there is an injection $S\longrightarrow T$, which extends to an embedding of fields $\mathbb{Q}(S)\longrightarrow \mathbb{Q}(T) \subset \mathbb{C}$. Since $\K'$ is algebraic over $\mathbb{Q}(S)$ and $\mathbb{C}$ is algebraically closed, the embedding extends to an embedding $\K' \longrightarrow \mathbb{C}$. Therefore, we obtain that
 $\K'\subseteq \mathbb C\cap \K$ such that  $xy-yx=1$ in $A_{\K'}(\mG)\subset A_{\mathbb{C}}(\mG)$. Then, since 
 $A_{\mathbb{C}}(\mG)$ is subalgebra of $C^*(\mG)$,
 there is a canonical unital homomorphism $A_{\K'}(\mG) \rightarrow C^*(\mG)$, where $C^*(\mG)$ is the groupoid $C^*$-algebra of $\mG$, and so the Heisenberg equation $xy-yx=1$ can be realized in $C^*(\mG)$. However, by Proposition~\ref{wielandt}, such equation cannot happen in the $C^*$-algebra $C^*(\mG)$, a contradiction. Thus, there is not a unital ring homomorphism from the first Weyl algebra $\mathbb A_\K$ to any unital Steinberg algebra. 

Conversely, suppose there exist a nontrivial unital Steinberg algebra $A_\K(\mG)$ and an injective  homomorphsm of unital rings $\phi: A_\K(\mG) \longrightarrow \mathbb A_\K$. Since the Weyl algebra $\mathbb A_\K$ has no zero divisors,  $A_\K(\mG)$  has no zero divisors, too.  By Proposition \ref{domain}, $\mathcal{G}$ is a torsion-free group with the discrete topology, and $A_\K(\mG)$ is isomorphic to the group algebra $\K\mG$. 

For each $g\in \mG$, we have that $\phi(g)$ is an invertible element in $\mathbb A_\K$. Also, every invertible element of $\mathbb A_\K$ is exactly a nonzero element of $\K$~(see~\cite[Section 2.2]{Cou95}). From these observations, we immediately get that $\K\mG\cong\phi(\K\mG) \subseteq \K$, and since $\phi$ is an injective $\K$-algebra homomorphism, $A_\K(\mG)\cong \K\mG \cong \K$, which contradicts with our hypothesis that $A_\K(\mG)$ is nontrivial. Therefore, there is no unital injective homomorphism from a nontrivial unital Steinberg algebra to $\mathbb A_\K$, thus finishing the proof.
\end{proof}

The algebra $L_\K(1,n)$ can be realized as the Leavitt path   algebra of the rose graph including one vertex and $n$-loops and thus can be modelled as a Steinberg algebra.  However,  the algebras $L_\K(m,n)$, for $1<m<n$, have no zero divisors and cannot be modelled as Steinberg algebras as the following proposition shows.

\begin{prop}
Let $\K$ be a field and $1 < m <n$ positive integers. Then, $L_\K(m, n)$ is not a Steinberg algebra.
\end{prop}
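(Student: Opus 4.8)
The plan is to argue by contradiction, using the structure theorem for zero-divisor-free Steinberg algebras (Proposition~\ref{domain}) to reduce to the case of a group ring, and then to derive a contradiction from the failure of the Invariant Basis Number property for $L_\K(m,n)$.

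First I would suppose that $L_\K(m,n)\cong A_R(\mG)$ as rings for some unital commutative ring $R$ and some ample groupoid $\mG$. Since $m>1$, the algebra $L_\K(m,n)$ has no zero divisors (recalled in the introduction), hence neither does $A_R(\mG)$. Proposition~\ref{domain} then yields that $R$ is a domain, that $\mG$ is a torsion-free group carrying the discrete topology, and that $A_R(\mG)\cong R\mG$. Consequently $L_\K(m,n)\cong R\mG$ for some group $\mG$ and some nonzero commutative ring $R$.

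Next I would observe that any group ring $R\mG$ over a nonzero commutative ring $R$ has the Invariant Basis Number property: the augmentation $\varepsilon\colon R\mG\to R$, $\sum_g r_g g\mapsto\sum_g r_g$, is a unital ring homomorphism; every nonzero commutative ring has IBN (pass to a residue field at a maximal ideal); and whenever there is a unital ring homomorphism from a ring $A$ to an IBN ring $B$, the ring $A$ itself has IBN (apply $B\otimes_A-$ to an isomorphism $A^k\cong A^l$ to obtain $B^k\cong B^l$, hence $k=l$). On the other hand, the defining relations $AB=I_m$, $BA=I_n$ of $L_\K(m,n)$ exhibit mutually inverse module homomorphisms between the free right $L_\K(m,n)$-modules of ranks $m$ and $n$ --- equivalently, $m\cdot[L_\K(m,n)]=n\cdot[L_\K(m,n)]$ in $\mathcal V(L_\K(m,n))\cong M_{m,n}$, cf.~\eqref{monformula} --- so $L_\K(m,n)^m\cong L_\K(m,n)^n$ even though $m\neq n$. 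Thus $L_\K(m,n)$ does not have IBN, which contradicts $L_\K(m,n)\cong R\mG$ and finishes the argument.

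There is no serious obstacle here; the only step demanding care is the passage through Proposition~\ref{domain}, which is precisely what collapses a zero-divisor-free Steinberg algebra to a group ring --- after which the genuinely non-IBN behaviour of $L_\K(m,n)$ for $m>1$ does the rest. It is also worth recording that ``being a Steinberg algebra'' is to be read up to ring isomorphism and over an arbitrary commutative coefficient ring, which is exactly the generality in which the IBN obstruction operates.
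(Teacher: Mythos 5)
Your proposal is correct and follows essentially the same route as the paper: assume $L_\K(m,n)$ is a Steinberg algebra, use the absence of zero divisors together with Proposition~\ref{domain} to reduce to a group ring, and then contradict the failure of IBN for $L_\K(m,n)$ via the augmentation homomorphism. The only (harmless) difference is that you allow an arbitrary commutative coefficient ring $R$ and spell out the IBN argument that the paper delegates to a citation of Lam.
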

\begin{proof}
Assume that $L_\K(m, n)$ is a Steinberg algebra over $\K$, that means, there exists an ample groupoid $\mathcal{G}$ such that $A_\K(\mathcal{G})\cong L_\K(m, n)$. Since the Leavitt algebra $L_\K(m, n)$  has no zero divisors,  $A_\K(\mG)$  has no zero divisors. By Proposition \ref{domain}, we immediately obtain that $\mathcal{G}$ is a torsion-free group with the discrete topology, and  $L_\K(m,n) \cong A_\K(\mathcal{G})\cong \K\mathcal{G}$, where $\K\mathcal{G}$ is the group algebra of $\mG$. We then have a ring homomorphism $\epsilon: \K\mG \longrightarrow \K$ defined by $\epsilon(\sum_{g\in \mG}k_gg) = \sum_{g\in \mG}k_g$. Since $\K$ is a field and by \cite[Remark 1.5]{Lam99}, $\K\mathcal{G}$ has the IBN property, and so $L_\K(m, n)$ has also the IBN property, a contradiction. This shows that $L_\K(m, n)$ is not isomorphic to any Steinberg algebra, thus finishing the proof.
\end{proof}

We are in a position to present the main result of this section which gives  an affirmative answer to Question~4.7. 

\begin{thm}\label{globalmain}
Let $\K$ be a field of characteristic zero, and  let $m$ and $n$ be positive integers with $m < n$.
Then  Weyl algebra $\mathbb A_\K$ does not embed into $L_\K(m,n)$.
\end{thm}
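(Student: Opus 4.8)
The plan is to split into the two cases $m=1$ and $m>1$, mirroring the structure already set up in the preceding propositions. For $m=1$, the algebra $L_\K(1,n)$ is the Leavitt path algebra of the rose with one vertex and $n$ loops, hence is a unital Steinberg algebra (via the associated graph groupoid); so Theorem~\ref{main}(1) applies directly: there is no unital homomorphism from $\mathbb A_\K$ to any unital Steinberg algebra, and in particular $\mathbb A_\K$ does not embed unitally into $L_\K(1,n)$. I would note that a unital embedding is the relevant notion here: $\mathbb A_\K$ is simple with identity, so any nonzero ring homomorphism out of it is injective, and since $L_\K(1,n)$ is unital with the only idempotents conjugate to sums of the canonical orthogonal idempotents, one checks that an embedding of the unital simple ring $\mathbb A_\K$ can be taken to land in a unital corner, reducing to the unital case; alternatively one simply observes $1_{\mathbb A_\K}$ must map to an idempotent $p$ and $\mathbb A_\K$ embeds into the corner $pL_\K(1,n)p$, which is again Morita equivalent to $L_\K(1,n)$ and still a Steinberg algebra of an ample groupoid with compact unit space, so Theorem~\ref{main}(1) still forbids the Heisenberg relation there.

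For the case $m>1$, the algebra $L_\K(m,n)$ is not a Steinberg algebra, so I must argue differently. Here I would again pass to an identity-preserving corner: if $\mathbb A_\K$ embeds into $L_\K(m,n)$, the image of $1_{\mathbb A_\K}$ is an idempotent $e$, and $\mathbb A_\K \hookrightarrow eL_\K(m,n)e$. The key structural fact is that $L_\K(m,n)$ for $m>1$ has no zero divisors (stated in the introduction, following Leavitt); hence any nonzero corner $eL_\K(m,n)e$ also has no zero divisors, and moreover $e L_\K(m,n) e$ shares the property that its only idempotents are $0$ and $e$. I would then follow the strategy of Theorem~\ref{main}(2): inside a domain, invertibility considerations pin things down. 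Concretely, $L_\K(m,n)$ embeds into its "big" completion, but more usefully: since $L_\K(m,n)$ is a domain with $1$, and there is a canonical $\K$-algebra homomorphism $L_\K(m,n)\to L_\K(1,n)$ obtained by collapsing the matrix indices (sending the $m\times n$ generating matrix appropriately) — actually the cleanest route is to use that $L_\K(m,n)$ has the Invariant Basis Number property only in a weak sense, so instead I would invoke the augmentation-type argument: the abelianization $L_\K(m,n)^{\mathrm{ab}}$.

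Let me revise the $m>1$ step to what I think is the real argument. Since $\mathbb A_\K$ is simple, an embedding $\mathbb A_\K \hookrightarrow L_\K(m,n)$ would exhibit the Heisenberg relation $xy-yx=1$ with $1$ the identity of the corner $eL_\K(m,n)e$. Now I want to map everything into a Banach algebra and apply Proposition~\ref{wielandt}. For this I would use that $L_\K(m,n)$ admits a $\K$-algebra homomorphism to $\mathcal O_n$-type $C^*$-completions is false in general, but there \emph{is} a unital homomorphism $L_\K(m,n)\to L_\K(1,n)$ when we quotient (or rather, $L_\K(1,n)$ receives the matrix relations): indeed collapsing $I_m$ and $I_n$ identifications, one has a surjection $L_\K(m,n)\twoheadrightarrow M_?$; the standard fact is $L_\K(m,n)\otimes M_\infty$ relationships. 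The safest and most self-contained approach, and the one I would actually commit to: reduce to characteristic-zero subfield $\K'\subseteq\mathbb C$ exactly as in the proof of Theorem~\ref{main} (transcendence basis argument), so we get $xy-yx=1$ in $L_{\K'}(m,n)$ with $\K'\subseteq\mathbb C$; then use that $L_\C(m,n)$ embeds into the Cuntz algebra $\mathcal O^{m}_n$ (the $C^*$-algebra generated by an $m\times n$ isometry-type system), which is a unital $C^*$-algebra; apply Proposition~\ref{wielandt} there to conclude $1=0$, a contradiction. The main obstacle is precisely justifying the existence of such a $C^*$-completion of $L_\C(m,n)$ in which the image of $1$ is nonzero — i.e. a unital $*$-representation of $L_\C(m,n)$ on Hilbert space; this is classical (the $m\times n$ analogue of Cuntz's $\mathcal O_n$), but one must cite or construct it carefully, whereas all the algebraic reductions (passing to a corner, descending to a countable subfield, handling the $m=1$ case via Steinberg algebras) are routine given the results already in the excerpt.
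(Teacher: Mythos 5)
Your proposal follows essentially the same route as the paper: the case $m=1$ is handled identically by realizing $L_\K(1,n)$ as the Steinberg algebra of the rose-graph groupoid and invoking Theorem~\ref{main}(1), and for $m>1$ you descend to a subfield of $\mathbb{C}$ via the transcendence-basis argument and push the Heisenberg relation into a unital Banach algebra satisfying the $m\times n$ relations, then apply Proposition~\ref{wielandt}. The one input you flag as ``the main obstacle'' --- a nonzero unital $*$-representation receiving $L_{\mathbb{C}}(m,n)$ --- is exactly what the paper supplies by citing McClanahan's universal unital $C^*$-algebra $U^{nc}_{(m,n)}$ generated by $u=[u_{ij}]$ subject to $u^*u=I_n$ and $uu^*=I_m$; note that only a unital homomorphism $L_{\mathbb{C}}(m,n)\to U^{nc}_{(m,n)}$ (automatic from the universal property of the Leavitt algebra) is required, not an embedding, so your correctly abandoned detours (the nonexistent map $L_\K(m,n)\to L_\K(1,n)$, the abelianization) are not needed.
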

\begin{proof}
If $m=1$, then  $L_\K(1,n)$ is the Leavitt path algebra associated to a graph with one vertex and $n$ loops (see, e.g., \cite[Proposition 1.3.2]{lpabook}). This algebra can be modelled by a Steinberg algebra (see \cite[Example 3.2]{clarksims}), and hence, by Theorem~\ref{main}, $\mathbb A_\K$ cannot be embedded into $L_\K(1,n)$. 

Next we show that $\mathbb A_\K$ cannot be embedded into $L_\K(m,n)$, where $1 < m < n$, using the $C^*$-algebras $U^{nc}_{(m, n)}$ introduced by McClanahan in \cite{mccal2}. The  $C^*$-algebra $U^{nc}_{(m, n)}$ is generated by elements $u_{ij}$, $1\le i\le m$, $1\le j\le n$, subject to the following relations on $u = [u_{ij}]:$ $u^*u = I_n$ and $uu^* = I_m$, where $I_k$ denotes the identity matrix (see \cite{mccal2}). By the universal property of $L_{\mathbb{C}}(m, n)$, there is a canonical algebra homomorphism $L_{\mathbb{C}}(m, n)\longrightarrow U^{nc}_{(m, n)}$ by the mapping: $1_{L_{\mathbb{C}}(m, n)}\longmapsto 1_{U^{nc}_{(m, n)}}$, $x_{ij}\longmapsto u_{ij}$ and $y_{ji}\longmapsto u^*_{ji}$.

Assume that there exist $x,y\in L_\K(m,n)$, such that $xy-yx=1$. A similar argument as in Theorem~\ref{main} shows that there is a subfield $\K' \subseteq \mathbb C$ such that $xy-yx=1$ in $L_{\K'}(m,n)$.  Since there is always a canonical algebra homomorphism $L_{\mathbb{C}}(m, n)\longrightarrow U^{nc}_{(m, n)}$ and $L_{\K'}(m,n) \subseteq L_{\mathbb{C}}(m,n)$, there is obviously a ring homomorphism  $L_{\K'}(m,n)\rightarrow U^{nc}_{(m, n)}$. Therefore, the Heisenberg equation $xy-yx=1$ can be realized in $U^{nc}_{(m, n)}$. On the other hand, by Proposition~\ref{wielandt}, we always have that  the Heisenberg equation $xy-yx=1$ cannot be realized in $U^{nc}_{(m, n)}$, a contradiction. This implies that $\mathbb A_\K$ cannot be embedded into $L_\K(m,n)$, thus finishing the proof.
\end{proof}
\medskip

\section{graded embedding}
In this section, we consider a graded version of Kirchberg embedding theorem. It is easy to see that in general Leavitt path algebras $L_\K(E)$ of graphs $E$ cannot be graded-embedded into $L_\K(1, 2)$. As an example,  a unital graded homomorphism $L_\K(1,3)\rightarrow L_\K(1,2)$, induces a homomorphism on the level of graded Grothendieck groups~\cite[\S3]{hazi}, 
\begin{align*}
\mathbb Z[1/3] = K_0^{\gr}(L_\K(1,3)) &\longrightarrow K_0^{\gr}(L_\K(1,2)) = \mathbb Z[1/2]\\
1 &\longmapsto 1
\end{align*}
which immediately leads to a contradiction. 

In this final section, we see that if we replace $L_\K(1,2)$ by $L_\K(1,2)\otimes L_\K(1,2)$, we can arrange for a graded embedding of Leavitt path algebras. 

Consider the algebra $L_\K(1,2)\otimes L_\K(1,2)$ with the grading induced by the standard $\mathbb Z$-grading on the first component and the
trivial grading on the second component. We denote the generators of the first component by $x_1,x_2$ and of the second component by $y_1,y_2$. Thus the degree of these elements are $\deg(x_1)=\deg(x_2)=1$ and $\deg(y_1)=\deg(y_2)=0$.

Let $E$ be a row-finite countable graph. Recall (\cite[Theorem 4.1]{brownsor}) that Brownlowe and S{\o}rensen proved that there is a $*$-preserving
embedding
$\varphi \colon L_{\K}(E) \hookrightarrow L_\K(1,2).$

We first establish some useful facts. Recall that in a $*$-algebra $A$, we say $u\in A$ is \emph{unitary} if $uu^*=u^*u=1$ and $p\in A$ is a \emph{projection} if $p=p^*=p^2$. We write $p\sim 1$, if $p=ss^*$ and $1=s^*s$ for some $s\in A$.

\begin{lem}\label{unitary}
The element $ u = x_1 \otimes y_1^* + x_2 \otimes y_2^*$
is a homogeneous of degree $1$ unitary in $L_\K(1,2)\otimes L_\K(1,2)$.
\end{lem}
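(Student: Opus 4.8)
The plan is to verify directly that $u = x_1 \otimes y_1^* + x_2 \otimes y_2^*$ satisfies $uu^* = u^*u = 1$ and that it is homogeneous of degree $1$, using the defining relations of $L_\K(1,2)$ in each tensor factor. First I would record the relations I need: in the first factor, $x_i^* x_j = \delta_{ij}$ and $x_1 x_1^* + x_2 x_2^* = 1$; in the second factor the same relations hold for the $y_i$. The degree claim is immediate since $\deg(x_i \otimes y_j^*) = \deg(x_i) + \deg(y_j^*) = 1 + 0 = 1$ in the chosen grading (standard $\mathbb Z$-grading on the first component, trivial on the second), so $u$ is homogeneous of degree $1$.

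Next I would compute $u^* = x_1^* \otimes y_1 + x_2^* \otimes y_2$ (using that the involution on a tensor product of $*$-algebras is the tensor of the involutions, and $(y_i^*)^* = y_i$). Then $u^* u = \sum_{i,j} (x_i^* x_j) \otimes (y_i y_j^*) = \sum_{i,j} \delta_{ij}\, 1 \otimes (y_i y_j^*) = 1 \otimes (y_1 y_1^* + y_2 y_2^*) = 1 \otimes 1 = 1$, where I used $x_i^* x_j = \delta_{ij}$ in the first factor and $y_1 y_1^* + y_2 y_2^* = 1$ in the second. Symmetrically, $u u^* = \sum_{i,j} (x_i x_j^*) \otimes (y_i^* y_j) = \sum_{i,j} (x_i x_j^*) \otimes \delta_{ij}\, 1 = (x_1 x_1^* + x_2 x_2^*) \otimes 1 = 1 \otimes 1 = 1$, now using $y_i^* y_j = \delta_{ij}$ in the second factor and $x_1 x_1^* + x_2 x_2^* = 1$ in the first.

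Both computations are short and entirely mechanical, so there is no real obstacle; the only point requiring a moment of care is bookkeeping the order of factors in the convolution-style products and making sure the cross terms ($i \neq j$) vanish — which they do because in one factor $x_i^* x_j = 0$ (resp. $y_i^* y_j = 0$) for $i \neq j$. I would present the two displayed identities for $u^*u$ and $uu^*$ and conclude that $u$ is a homogeneous degree-$1$ unitary, noting in passing that this simultaneously shows $u$ is invertible with $u^{-1} = u^*$, which is what will be used downstream (e.g. to transport the grading via conjugation by $u$).
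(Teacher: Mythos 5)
Your proof is correct and takes essentially the same route as the paper's: a direct expansion of $uu^*$ and $u^*u$ using the relations $x_i^*x_j=\delta_{ij}$, $y_i^*y_j=\delta_{ij}$ and $x_1x_1^*+x_2x_2^*=1=y_1y_1^*+y_2y_2^*$ in the respective tensor factors, together with the observation that $\deg(x_i\otimes y_j^*)=1+0=1$. The only difference is cosmetic: the paper writes out one of the two products and declares the other ``similar,'' whereas you record both.
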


\begin{proof}
It is clear that the element has degree $1$. We check that it is unitary:
\begin{align*}
( x_1 \otimes y_1^* + x_2 \otimes y_2^* )( x_1 \otimes y_1^* + x_2 \otimes y_2^* )^*
&= ( x_1 \otimes y_1^* + x_2 \otimes y_2^* )( x_1^* \otimes y_1 +  x_2^* \otimes y_2 ) \\
&= x_1  x_1^* \otimes y_1^* y_1
 + x_1  x_2^* \otimes y_1^* y_2\\
&\quad + x_2  x_1^* \otimes y_2^* y_1
 + x_2  x_2^* \otimes y_2^* y_2 \\
&= x_1 x_1^* \otimes 1 + x_2 x_2^* \otimes 1 \\
&= 1.
\end{align*}
The other multiplication is similar.
\end{proof}

\begin{lem}\label{lem4.2}
Let $p_1,\dots,p_n \in L_\K(1,2)$ be pairwise orthogonal projections such that $p_i\sim 1$, $1\leq i\leq n$ and 
\[
\sum_{i=1}^n p_i = 1.
\]
Then, there exists a homogeneous degree $1$ unitary in $L_\K(1,2)\otimes L_\K(1,2)$ commuting with all $1 \otimes p_i$.
\end{lem}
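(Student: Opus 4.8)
The plan is to write the desired unitary in block form with respect to the decomposition $1\otimes 1=\sum_{i=1}^n(1\otimes p_i)$, by transporting the single degree-$1$ unitary of Lemma~\ref{unitary} into each block and adding up the pieces.

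\textbf{Step 1: extract a Cuntz family from the hypotheses.} Using $p_i\sim 1$, I would first choose $s_i\in L_\K(1,2)$ with $s_i s_i^*=p_i$ and $s_i^*s_i=1$. From $s_i=(s_is_i^*)s_i=p_is_i$ one gets $s_i^*=s_i^*p_i$, hence $s_i^*s_j=s_i^*(p_ip_j)s_j=\delta_{ij}1$ by pairwise orthogonality of the $p_i$; together with $\sum_i s_is_i^*=\sum_i p_i=1$ this exhibits $\{s_1,\dots,s_n\}$ as a Cuntz family inside $L_\K(1,2)$. Consequently, for each $i$ the map $\mathrm{Ad}_{s_i}\colon b\mapsto s_i b s_i^*$ is a $*$-isomorphism of $L_\K(1,2)$ onto the corner $p_i L_\K(1,2) p_i$ carrying $1$ to $p_i$ (injectivity from $s_i^*s_i=1$; the image is the whole corner since $p_i L_\K(1,2) p_i=s_i(s_i^* L_\K(1,2) s_i)s_i^*$), and it is trivially grading-preserving because the second tensor factor carries the trivial grading.

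\textbf{Step 2: distribute the model unitary over the blocks.} Let $u=x_1\otimes y_1^*+x_2\otimes y_2^*$ be the degree-$1$ unitary from Lemma~\ref{unitary}. I would then set
\[
U_i:=(\mathrm{id}\otimes\mathrm{Ad}_{s_i})(u)=x_1\otimes s_iy_1^*s_i^*+x_2\otimes s_iy_2^*s_i^*,\qquad U:=\sum_{i=1}^n U_i .
\]
Since $\mathrm{id}\otimes\mathrm{Ad}_{s_i}$ is a graded $*$-homomorphism of $L_\K(1,2)\otimes L_\K(1,2)$ sending $1$ to $1\otimes p_i$, Lemma~\ref{unitary} immediately gives that each $U_i$ is homogeneous of degree $1$ with $U_iU_i^*=U_i^*U_i=1\otimes p_i$; in particular $\deg U=1$. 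The relation $s_i^*s_j=0$ for $i\neq j$ forces a factor $s_i^*s_j$ to appear in the second tensor leg of every summand of $U_iU_j^*$ and of $U_i^*U_j$, so $U_iU_j^*=0=U_i^*U_j$ for $i\neq j$; hence $UU^*=\sum_iU_iU_i^*=1\otimes\sum_ip_i=1$, and symmetrically $U^*U=1$, so $U$ is a homogeneous degree-$1$ unitary. Finally, $U_j(1\otimes p_i)=\sum_k x_k\otimes s_jy_k^*(s_j^*s_i)s_i^*$ vanishes for $j\neq i$ and equals $U_i$ for $j=i$, so $U(1\otimes p_i)=U_i$; the mirror computation gives $(1\otimes p_i)U=U_i$, so $U$ commutes with every $1\otimes p_i$, as required.

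\textbf{Where the work is.} The one step carrying any real content is Step~1: pulling the Cuntz family out of the data and identifying each corner $p_iL_\K(1,2)p_i$, \emph{as a trivially graded algebra}, with a copy of $L_\K(1,2)$. This is precisely where the hypotheses $p_i\sim1$ and $\sum p_i=1$ are consumed, and it is exactly what lets the single degree-$1$ unitary of Lemma~\ref{unitary} be cloned into every block. Everything afterwards is a mechanical verification using the Cuntz relations $s_i^*s_j=\delta_{ij}$, $\sum_is_is_i^*=1$ together with $x_k^*x_l=\delta_{kl}$, $\sum_k x_kx_k^*=1$ and $y_k^*y_l=\delta_{kl}$, $\sum_k y_ky_k^*=1$; I do not anticipate a genuine obstacle.
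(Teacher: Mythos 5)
Your construction is exactly the paper's: your $U=\sum_i(\mathrm{id}\otimes\mathrm{Ad}_{s_i})(u)$ is literally the element $t=\sum_{i=1}^n(1\otimes s_i)\,u\,(1\otimes s_i^*)$ that the paper writes down, and your verification via the Cuntz relations $s_i^*s_j=\delta_{ij}$, $\sum_i s_is_i^*=1$ is correct. The only difference is that you carry out the checks the paper leaves to the reader.
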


\begin{proof}
Find  $s_i \in L_\K(1,2)$ such that $s_i s_i^* = p_i$ and $s_i^* s_i = 1$.
Let $u=x_1 \otimes y_1^* + x_2 \otimes y_2^*\in L_\K(1,2)$ be the homogeneous unitary of degree 1 from Lemma~\ref{unitary}. Then, the element
\[
t = \sum_{i=1}^n (1 \otimes s_i)\, u\, (1 \otimes s_i^*)
\]
is a homogeneous unitary of degree $1$  commuting with all $1 \otimes p_i$.
\end{proof}

We are now in a position to state the main result of this section.

\begin{thm}\label{grad-embed}
There is a grading-preserving $*$-embedding
\[
\psi \colon L_{\K}(E) \hookrightarrow L_\K(1,2)\otimes L_\K(1,2).
\]
\end{thm}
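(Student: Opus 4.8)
The idea is to combine Brownlowe and Sørensen's $*$-preserving embedding $\varphi\colon L_\K(E)\hookrightarrow L_\K(1,2)$ with a degree-$1$ unitary from $L_\K(1,2)\otimes L_\K(1,2)$ that implements the $\mathbb Z$-grading of $L_\K(E)$ externally, so that the embedding $x\mapsto u^{\deg x}\otimes\varphi(x)$-type formula becomes grading-preserving. Concretely, first I would recall that the grading on $L_\K(E)$ is the canonical $\mathbb Z$-grading in which vertices are degree $0$, edges $e$ are degree $1$, and ghost edges $e^*$ are degree $-1$; since $E$ is row-finite and countable, $\varphi$ lands in $L_\K(1,2)$ and is $*$-preserving. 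The plan is to define $\psi$ on generators by
\[
\psi(v)=1\otimes\varphi(v),\qquad \psi(e)=t\,( 1\otimes\varphi(e)),\qquad \psi(e^*)=(1\otimes\varphi(e^*))\,t^*,
\]
where $t\in L_\K(1,2)\otimes L_\K(1,2)$ is a homogeneous degree-$1$ unitary to be chosen so that $t$ commutes with $1\otimes\varphi(v)$ for every $v\in E^0$ (here the second tensor factor carries the trivial grading, so $1\otimes\varphi(E)$ sits in degree $0$, and $t$ supplies all the grading). One then checks that $\psi$ respects relations (1)--(4): relation (1) is immediate from $\varphi$ being an algebra homomorphism; (2) and (3) use unitarity of $t$ together with the commutation $t(1\otimes\varphi(v))=(1\otimes\varphi(v))t$; (4) uses in addition that $t$ is a \emph{two-sided} unitary, so $\sum_e \psi(e)\psi(e^*)=t\big(1\otimes\sum_e\varphi(e)\varphi(e^*)\big)t^*=t(1\otimes\varphi(v))t^*=1\otimes\varphi(v)=\psi(v)$.

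The construction of $t$ is exactly what Lemma~\ref{lem4.2} provides, so this is where I would invoke the earlier work. The point is that the range projections $\{\varphi(v)\}_{v\in E^0}$ need not be pairwise orthogonal or sum to $1$ inside $L_\K(1,2)$, so I cannot apply Lemma~\ref{lem4.2} to them directly. Instead I would pass to a finite or countable refinement: enlarge $\{\varphi(v)\}$ to a family of pairwise orthogonal projections $p_1,\dots,p_n$ (or a countable family, using the row-finite direct-limit remark after Theorem~\ref{sLPA-embed}) with $\sum p_i=1$ in $L_\K(1,2)$ and each $p_i\sim 1$ — this is possible because $L_\K(1,2)$ is a purely infinite simple unital ring in which every nonzero projection is equivalent to $1$, and orthogonal decompositions of $1$ into such projections are abundant. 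Each $\varphi(v)$ is then a finite sum of some of the $p_i$, so a degree-$1$ unitary $t$ commuting with every $1\otimes p_i$ automatically commutes with every $1\otimes\varphi(v)$. Lemma~\ref{lem4.2} then yields such a $t$.

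It remains to verify injectivity of $\psi$. Here I would argue that $\psi$ is injective on the zero-graded component (where it restricts, via the identification of degree-$0$ elements, essentially to $1\otimes\varphi(\cdot)$ composed with conjugation by powers of $t$, hence is injective since $\varphi$ is), and then promote this to injectivity on all of $L_\K(E)$ using the graded uniqueness theorem for Leavitt path algebras: a graded homomorphism out of $L_\K(E)$ is injective as soon as it is nonzero on every vertex, which holds here because $\psi(v)=1\otimes\varphi(v)\neq 0$. Finally, $\psi$ is $*$-preserving by construction since $\varphi$ is and $t^{**}=t$, and it is grading-preserving by the degree bookkeeping above. The main obstacle I anticipate is precisely the mismatch between the $\varphi(v)$ and a full orthogonal system summing to $1$: one must choose the refinement $\{p_i\}$ carefully (and, in the countable row-finite case, take a compatible direct limit of such choices) so that Lemma~\ref{lem4.2} applies and the resulting $t$ does not interfere with the already-embedded copy of $L_\K(E)$ in the second tensor factor.
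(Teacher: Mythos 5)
Your proposal follows essentially the same route as the paper: twist the Brownlowe--S{\o}rensen embedding by a homogeneous degree-$1$ unitary from Lemma~\ref{lem4.2} commuting with the projections $1\otimes\varphi(v)$, define $\psi$ on generators exactly as the paper does, and verify the relations; your appeal to the graded uniqueness theorem for injectivity is a clean variant of the paper's recovery argument. The only superfluous step is your refinement of $\{\varphi(v)\}$: in the finite-vertex case the vertices are pairwise orthogonal idempotents summing to $1$ and $\varphi$ is a unital homomorphism, so the $p_v=\varphi(v)$ already satisfy the hypotheses of Lemma~\ref{lem4.2} directly (each $p_v\sim 1$ because every nonzero projection of $L_\K(1,2)$ is equivalent to $1$, which is the content of the cited Proposition~4.3 of Brownlowe--S{\o}rensen).
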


\begin{proof}
We assume that $E$ has a finite number of vertices, though the result
easily generalizes to countable graphs.

Let $\varphi \colon L_{\K}(E) \hookrightarrow  L_\K(1,2)$ be the 
unital embedding given in~\cite{brownsor}. Let $\{v, e \}_{v \in E^0, \, e \in E^1}$ be the standard
generators of $L_{\K}(E)$. Consider the projections
\[
p_v = \varphi(v) \in  L_\K(1,2) \quad (v \in E^0).
\]

Since the projections $p_v$ satisfy the assumptions of Lemma~\ref{lem4.2} (\cite[Proposition 4.3]{brownsor}), we find a homogeneous degree $1$ unitary $u \in L_\K(1,2)\otimes L_\K(1,2)$ which commutes with all
$1 \otimes p_v$. Define the map $\psi \colon L_{\K}(E) \to L_\K(1,2)\otimes L_\K(1,2)$ on generators by
\[
\psi(v) = 1 \otimes p_v, \qquad
\psi(e) = u (1 \otimes \varphi(e)).
\]

It is straightforward to check that $\psi$ preserves the defining relations
of $L_{\K}(E)$ and is grading-preserving. Injectivity follows from the injectivity
of $\varphi$, since $\psi(L_{\K}(E))$ is contained in the subalgebra generated by
$1 \otimes \varphi(L_{\K}(E))$ and $u$, and $\varphi$ can be recovered by sending
$u \mapsto 1$, thus finishing the proof.
\end{proof}

We finish the note, by pointing out that one can graded-embed Leavitt path algebras $L_\K(E)$ into any strongly  $\mathbb Z$-graded ring $A=\oplus_{i\in \mathbb Z}A_i$, with the monoid $\mathcal V(A_0)=\{0,1\}$. Here $A_0$ is the ring of zero component elements of $A$. For, by Dade’s theorem~\cite[\S1.5]{hazi}, $\mathcal  V^{\gr}(A)=\mathcal  V(A_0)=\{0,1\}$ with the action of $\mathbb Z$ trivially on $\{0,1\}$. So for any conical $\mathbb Z$-monoid $M$, we have an obvious $\mathbb Z$-monoid homomorphism from $M$ to $\mathcal V^{\gr}(A)$. (Here the monoid $\{0,1\}$ with 
$\mathbb Z$ acting trivially is the weak terminal object of conical monoids with $\mathbb Z$-action.) Now the graded version of Bergman's theorem~\cite[Theorem~8.1]{hazlipre} implies that there is a graded homomorphism $B\rightarrow A$, where $B$ is the graded Bergman algebra associated to $M$. 
Specializing to Leavitt path algebras, $L_\K(E)$, 
we get a graded homomorphism $L_\K(E)\rightarrow A$. One can then uses the graded uniqueness theorem to show this homomorphism is injective. As an example one can choose $A=L_\K(1,2)\otimes \K[x,x^{-1}]$, where the first component has given the trivial grading and conclude that for any finite graph $E$, we have $L_\K(E) \hookrightarrow A$.

{\bf Acknowledgement.} 
Hazrat acknowledges Australian Research Council Discovery Grant DP230103184. He would like to thank Ralf Meyer for hosting him in G\"ottingen in June 2025, where the initial discussions leading to this work began.
Bilich is partially supported by the Bloom PhD scholarship and the DFG Middle-Eastern collaboration project no. 529300231. Nam is supported by the Vietnam Academy of Science and Technology.
The authors express their gratitude to Gene Abrams, Guillermo Corti\~nas and Ralf Meyer for their valuable suggestions which led to the final shape of the paper.

\end{document}